\theoremstyle{plain}
\newtheorem{theorem}{Theorem}[section]
\newtheorem{lemma}{Lemma}[section]
\newtheorem{proposition}{Proposition}[section]
\newtheorem{Conjecture}{Conjecture}[section]
\theoremstyle{definition}
\theoremstyle{remark}
\numberwithin{equation}{section}
\def\qed{\ifhmode\textqed\fi
   \ifmmode\ifinner\quad\qedsymbol\else\dispqed\fi\fi}
\def\textqed{\unskip\nobreak\penalty50
    \hskip2em\hbox{}\nobreak\hfil\qedsymbol
    \parfillskip=0pt \finalhyphendemerits=0}
\def\dispqed{\rlap{\qquad\qedsymbol}}
\title[]{The $k$-tuple Prime Difference Champion}
\author{Libo Wu}
\address{ School of Mathematics\\
          Hefei University of Technology\\
          Hefei 230009, P.R. China}
\email{wulibohfut@163.com}
\author{Xiaosheng Wu*}
\address{ School of Mathematics\\
          Hefei University of Technology\\
          Hefei 230009, P.R. China}
\email{xswu@amss.ac.cn}
\thanks{*Corresponding author}
\keywords{Hardy-Littlewood Prime $k$-tuple Conjecture; Prime difference; Primorial number; Jumping champion; Singular series.}
\subjclass[2010]{Primary 11N05; Secondary 11P32, 11N36 }
\begin{document}
\begin{abstract}
Let $D_{k}$ be a set with $k$ distinct elements of integers such that $d_{1}<d_{2}<\cdots<d_{k}$. We say $D_{k}^{*}$ is a $k$-tuple prime difference champion ($k$-tuple PDC) for primes $\le x$ if the set $D_{k}^{*}$ is the most probable differences among $k+1$ primes up to $x$. Unconditionally we prove that the $k$-tuple PDCs go to infinity and further have asymptotically the same number prime factors when weighted by logarithmic derivative as the porimorials. Assuming an appropriate form of the Hardy-Littlewood Prime $k$-tuple Conjecture, we obtain that the $k$-tuple PDCs are infinite square-free numbers containing any large primorial as factor when $x\rightarrow \infty$.
\end{abstract}

\maketitle

\section{Introduction}
The differences among primes are the most insightful subject of the primes and the study of prime differences has been conducted for a long time. If restricted to consecutive primes, the differences are known as gaps of primes. In the issues of the $1977$-$1978$ volumes of Journal of Recreational Mathematics, Nelson proposed the problem \cite{Nel1}\cite{Nel2}: ``Find the most probable difference between consecutive primes." Soon after, in $1980$, on the truth of the Hardy-Littlewood Prime Pair Conjecture, Erd\"{o}s and Straus showed that there is no most likely difference between consecutive primes \cite{ES}, since they found that the most likely difference grows as the number they considered becomes larger.\par
 In $1993$, Conway invented the term jumping champion to refer to the most common prime gap between consecutive primes not exceeding $x$. Let $p_{n}$ denote the $n$-th prime. The jumping champions are the integers of $d$ for which the counting function
\begin{equation}\label{conwaydef}
N(x,d)=\sum_{\substack{p_{n+1}\leq x\\p_{n+1}-p_{n}=d}}1
\end{equation}
attains its maximum
\begin{equation}
N^{*}(x)=\max_{d} N(x,d).
\end{equation}\par
In $1999$, based on heuristics arguments and extensive numerical studies, Odlyzko, Rubinstein and Wolf \cite{ORW} enunciated the following two significant conjectures for this problem:
\begin{Conjecture}
\label{con1.1}
The jumping champions greater than 1 are 4 and the primorials 2, 6, 30, 210, 2310,$\cdots$.
\end{Conjecture}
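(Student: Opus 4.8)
The statement of Conjecture~\ref{con1.1} is still open, so what I can offer is a \emph{conditional} route: a full proof appears to require a quantitative, sufficiently uniform version of the Hardy--Littlewood prime $k$-tuple conjecture, of the sort alluded to in the abstract. The plan has two stages. First, produce an asymptotic formula for the counting function $N(x,d)$ of \eqref{conwaydef} whose main term is an explicit function of $d$ and $\log x$. Second, run an extremal analysis of that main term over $d$ and show that, in the range of $d$ that can actually contain the champion, the maximum is attained precisely at a primorial, with $d=4$ left over as a genuine low-$x$ exception.

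For the first stage I would begin from the elementary fact that $p_{n+1}-p_n=d$ holds exactly when $p_n$ and $p_n+d$ are both prime while $p_n+1,\dots,p_n+d-1$ are all composite, and then open up the ``all composite'' requirement by inclusion--exclusion over subsets $\mathcal H\subseteq\{1,\dots,d-1\}$. This rewrites $N(x,d)$ as an alternating sum of the tuple counting functions $\pi_{\{0\}\cup\mathcal H\cup\{d\}}(x)$. Substituting the Hardy--Littlewood prediction $\pi_{\mathcal B}(x)\sim\mathfrak S(\mathcal B)\,x/(\log x)^{|\mathcal B|}$ for each admissible tuple $\mathcal B$ and resumming the singular series, one should arrive (schematically) at
\[
N(x,d)\ \sim\ \frac{x}{(\log x)^{2}}\;\mathfrak S(d)\,\Phi\!\left(\frac{d}{\log x}\right),
\]
where $\mathfrak S(d)=2C_{2}\prod_{p\mid d,\,p>2}\frac{p-1}{p-2}$ is the singular series of the pair $\{0,d\}$ and $\Phi$ is a fixed, roughly exponentially decaying profile produced by the intermediate tuples. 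Heuristically $\mathfrak S(d)$ rewards $d$ with many small prime factors while $\Phi$ penalises $d$ that is large compared with $\log x$, and the balance forces the optimiser to sit near a primorial of size comparable to $\log x$.

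The second stage is to turn that heuristic into a proof. Fixing a dyadic window $d\asymp\log x$, I would compare $N(x,d)$ with $N(x,d')$ for the neighbours $d'$ of $d$: replacing $d$ by $d$ times the least prime not dividing it gives the largest possible multiplicative gain in $\mathfrak S$ for the smallest possible growth in size, which forces any local maximiser to be squarefree and supported on an initial segment of the primes --- i.e.\ a primorial --- and then monotonicity of $\Phi$ selects \emph{which} primorial as a function of $x$, so the champion climbs through $2,6,30,210,2310,\dots$ as $x\to\infty$. The genuinely small cases ($d=4$, and the exclusion of $d=1,2,3,5,\dots$) fall outside the asymptotic régime and must be dispatched by a separate, essentially explicit estimate valid for moderate $x$, matched to the point where $\log x$ has overtaken the first few primorials.

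The main obstacle is uniformity. The inclusion--exclusion runs over the $2^{d-1}$ subsets of $\{1,\dots,d-1\}$ with $d$ itself as large as $\asymp\log x$, so the error term in the Hardy--Littlewood asymptotic must be uniform over all tuples of size up to about $\log x$ and summable against their number; no statement about a single fixed tuple suffices, which is precisely why the argument stays conditional on a calibrated uniform form of the conjecture. A secondary difficulty is the transition zone --- $d$ of order $\log x$ but far from any primorial, where $\Phi$ is small and the error terms are most dangerous --- together with the near-ties between consecutive primorials that pin down the exact values of $x$ at which the champion jumps.
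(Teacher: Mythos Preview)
There is nothing in the paper to compare against: Conjecture~\ref{con1.1} is quoted from Odlyzko--Rubinstein--Wolf as background and is \emph{not} proved here. The paper's own theorems concern the $k$-tuple \emph{prime difference} champions (differences among not-necessarily-consecutive primes), a different object from the jumping champions of \eqref{conwaydef}. The only remark the paper makes about Conjecture~\ref{con1.1} is historical: Goldston and Ledoan \cite{GL2} established it for all sufficiently large champions assuming a strong form of the Hardy--Littlewood pair \emph{and} triple conjectures. You are therefore right to flag the statement as open and to offer only a conditional sketch.

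On the sketch itself, one point deserves sharpening. Your inclusion--exclusion over all $2^{d-1}$ subsets of $\{1,\dots,d-1\}$, with $d\asymp\log x$, would force you to assume the $k$-tuple conjecture uniformly for tuples of size up to $\log x$ with error terms that survive a sum of $2^{\log x}$ pieces; this is far stronger than anything actually needed or used in the literature. The Goldston--Ledoan argument referenced by the paper avoids this by working only with pairs and triples: the pair conjecture supplies the main term $\mathfrak S(d)\,x/\log^2 x$, and the triple conjecture is invoked solely to bound, via a second-moment/sieve comparison, the difference between ``$p$ and $p+d$ both prime'' and ``$p,p+d$ consecutive primes'' well enough to push through the extremal comparison between $d$ and its primorial competitors. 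Your second stage --- the squarefree/initial-segment optimisation of $\mathfrak S$ balanced against the size penalty --- is the correct picture, but the input it needs is the pair/triple level of Hardy--Littlewood with a modest saving, not the full resummed profile $\Phi$ coming from arbitrarily long tuples. Finally, note that even that conditional route only handles sufficiently large champions; the assertion that $4$ and the small primorials are the \emph{only} champions for small $x$ remains a separate, essentially computational, matter and is not claimed in \cite{GL2} or in the present paper.
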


\begin{Conjecture}
\label{con1.2}
The jumping champions tend to infinity. Furthermore, any fixed prime $p$ divides all sufficiently large jumping champions.
\end{Conjecture}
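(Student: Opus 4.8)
Conjecture 1.2 is not accessible unconditionally, so the plan is to deduce it from the Hardy--Littlewood prime pair conjecture equipped with an error term that is uniform over short tuples. The architecture has three parts: (i) convert the gap count $N(x,d)$ into an honest main term built from singular series; (ii) show that over the range of $d$ that can conceivably win this main term is governed by the arithmetic factor $\mathfrak{S}(\{0,d\})=2C_{2}\prod_{p\mid d,\,p\ge 3}\frac{p-1}{p-2}$ (times a ``compositeness'' correction) and is therefore maximized, up to an irrelevant power of $2$, by primorials; (iii) conclude that the winning primorial $\prod_{p\le y(x)}p$ has $y(x)\to\infty$, which gives both halves of the conjecture at once.

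For (i) I would set $\pi_{\mathcal H}(x)=\#\{n\le x:n+h\text{ prime for all }h\in\mathcal H\}$ and expand $N(x,d)$ by inclusion--exclusion over the subsets $S\subseteq\{1,\dots,d-1\}$ of intermediate integers required to be composite, $N(x,d)=\sum_{S}(-1)^{|S|}\pi_{\{0\}\cup S\cup\{d\}}(x)$, truncate this alternating sum at a bounded Bonferroni level while bounding the tail by a Selberg upper-bound sieve, and then substitute $\pi_{\mathcal H}(x)\sim\mathfrak{S}(\mathcal H)\,x/(\log x)^{|\mathcal H|}$ for each of the $O(1)$ surviving tuples. Resumming produces $N(x,d)=(1+o(1))\,\mathfrak{S}(\{0,d\})\,\frac{x}{(\log x)^{2}}\,G\!\big(\tfrac{d}{\log x}\big)$ on a suitable growing window of $d$, with $G$ an explicit positive decreasing factor recording the chance that the $\approx d/\log x$ intermediate integers coprime to $d$ all fail to be prime --- this is the Erd\H os--Straus / Odlyzko--Rubinstein--Wolf heuristic made rigorous.

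For (ii) the point is that adjoining a new prime $p$ to $d$ multiplies $\mathfrak{S}(\{0,d\})$ by $\frac{p-1}{p-2}>1$, largest when $p$ is smallest, and also enlarges $G$; a short extremal computation then shows that subject to the ceiling $d\le D(x)$ effectively imposed by $G$ the maximizer of the main term is the largest primorial $\le D(x)$, boundary competitors such as $p_{m}\#\,p_{m+1}/p_{m}$ being eliminated once $D(x)$ is large. Step (iii) is then immediate: since $D(x)\to\infty$ the winning primorial has $y(x)\to\infty$, so the jumping champions tend to infinity, and any fixed prime $p$ satisfies $p\le y(x)$ for $x$ large and hence divides all sufficiently large jumping champions.

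The same scheme yields the paper's $k$-tuple statement, and is cleaner there since counting $(k+1)$-tuples of primes $p,p+d_{1},\dots,p+d_{k}\le x$ needs no inclusion--exclusion: Hardy--Littlewood gives $N(x,D_{k})=(1+o(1))\,\mathfrak{S}(\{0\}\cup D_{k})\,x/(\log x)^{k+1}$, and $\mathfrak{S}(\{0\}\cup D_{k})=\prod_{p}(1-1/p)^{-(k+1)}(1-\nu(p)/p)$ is maximized by forcing $\nu(p)=1$ at as many small primes as the diameter allows, i.e.\ by difference sets whose members are all multiples of a primorial with radical tending to infinity; for this non-consecutive problem the ``goes to infinity'' half is moreover unconditional, via the first-moment identity $\sum_{D_{k}}N(x,D_{k})=\binom{\pi(x)}{k+1}$, the $(k+1)$-dimensional Selberg bound $N(x,D_{k})\ll_{D_{k}}x/(\log x)^{k+1}$, and a second-moment refinement that pins $\mathfrak{S}(\{0\}\cup D_{k}^{*})$ down well enough to give $\sum_{p\mid d_{i}^{*}}\frac{\log p}{p}\sim\log\log d_{i}^{*}$, matching a comparable primorial. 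The main obstacle is uniformity in the conditional step: competing main terms differ only by factors polynomial in $\log x$, so one needs a Hardy--Littlewood error term uniform over all short tuples that still beats this margin, and keeping the accumulated error in the inclusion--exclusion (or the Brun/Selberg truncation) below the main term is the crux; a subsidiary difficulty is the delicate extremal analysis near $d\approx D(x)$ where several near-primorials compete.
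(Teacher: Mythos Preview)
The paper does not prove Conjecture~1.2. It is quoted purely as background on jumping champions between \emph{consecutive} primes, with the remark that Erd\"{o}s--Straus established the first assertion and Goldston--Ledoan the full statement, both conditionally on the Hardy--Littlewood pair conjecture. The paper's own theorems concern $k$-tuple \emph{prime difference champions}, where consecutivity is dropped and the count is $G_k(x,D_k)$ rather than $N(x,d)$; so there is no proof here against which to compare your inclusion--exclusion plan for $N(x,d)$. That plan is broadly the Goldston--Ledoan route, and the obstacles you flag (uniformity of the Hardy--Littlewood error through the Bonferroni truncation, and the boundary extremal analysis near the largest admissible primorial) are indeed the real ones.

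Your closing paragraph on the $k$-tuple PDC case \emph{is} comparable to what the paper actually proves, and there the methods differ in detail. For the unconditional part the paper does not use the global first-moment identity $\sum_{D_k}G_k(x,D_k)=\binom{\pi(x)}{k+1}$ plus a ``second-moment refinement''; instead it restricts to $D_k$ with every $d_i$ divisible by a primorial $q\le x/\log^2 x$ and bounds $\sum_{q\mid d_i}G_k(x,D_k)$ from below via the $(k{+}1)$-st moment $\sum_{a\bmod q}\pi(x;q,a)^{k+1}$ (Lemmas~3.1--3.2), then compares with the sieve upper bound to obtain $\mathfrak{S}(\{0\}\cup D_k^*)\gg(\log\log x)^k$ and $\sum_{p\mid d^*}1/p=\log\log\log x+O(1)$ for the gcd $d^*$ --- not the weighted sum $\sum_{p\mid d_i^*}(\log p)/p$ you wrote. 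For the conditional part the paper does not run a global maximization of $\mathfrak{S}$; it fixes a single test set $\widetilde D_k=\lfloor x/\log x\rfloor^\sharp*\{1,\dots,k\}$, shows (Lemma~4.2) that its singular series is within a factor $1+O(1/\log\log x)$ of best possible, localizes $d_k^*$ to the window $[x/\log^2 x,\,Cx/\log\log x]$ (Proposition~4.1), and then eliminates any small prime $p'\nmid d^*$ by comparing $D_k^*$ with $p'*D_k^*$.
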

Conjecture \ref{con1.2} is a weaker consequence of Conjecture \ref{con1.1}. The first assertion of Conjecture \ref{con1.2} was proved in 1980 by Erd\"{o}s and Straus \cite{ES} under the assumption of the truth of the Hardy-Littlewood Prime Pair Conjecture. In 2011, Goldston and Ledoan \cite{GL1} extended successfully the method in \cite{ES} to give a complete proof of Conjecture \ref{con1.2} under the same assumption. Shortly after that, in \cite{GL2} they also proved Conjecture \ref{con1.1} for sufficiently large jumping champions by requiring information about a strong form of the Hardy-Littlewood Prime Pair Conjecture and the Prime Triple Conjecture.

In 2012, motivated by Goldston and Ledoan's work, Feng and Wu \cite{FW} considered the problem what are the most probable differences among $k+1$ consecutive primes for any given $k\geq1$. With an appropriate form of the Hardy-Littlewood Prime $k$-tuple Conjecture, Conjecture \ref{con1.2} for $k$-tuple jumping champions is proved in \cite{FW}, and some properties of Conjecture \ref{con1.1} are also obtained in \cite{FW}.


It is important to note that, when restricted to differences among consecutive primes, all results about Jumping Champions such as in \cite{FW, GL1, GL2} are based on the Hardy-Littlewood Prime $k$-tuple Conjecture for different $k$ respectively. Actually, we know virtually nothing unconditionally about the Jumping Champions among consecutive primes. We can not even disprove that $2$ is not the Jumping Champions for all sufficiently large $x$ now.

It is natural to consider differences among primes but not consecutive primes. In $2016$, Funkhouser. \emph{etc} \cite{FGSS} considered the most common differences among primes not exceeding $x$, which they named as the Prime Difference Champions (PDCs). They proved that PDCs run through the primorials for sufficiently large $x$ under an appropriate form of the Hardy-Littlewood Prime Pair Conjecture. It is inspiring that some unconditional results were obtained in \cite{FGSS}. Actually, they proved unconditionally that PDCs go to infinity as $x$ runs to infinity, and furthermore, PDCs have asymptotically the same number prime factors when weighted by logarithmic derivative as the primorials.

In this paper, we put our concentration on the problem what are the most probable differences among $k+1$ distinct primes with $k\geq1$. Firstly, we should formally give description of the most probable differences among $k+1$ distinct primes. Let $D _{k}=\{d_{1},\cdots,d_{k}\}$ be a set of $k$ distinct integers with $d_{1}<d_2<\cdots<d_{k}$. For sufficiently large $x$, we say that a set $D_{k}$, denoted by $D_{k}^{*}$, is a $k$-tuple prime difference champion for all primes $\le x$ if it makes the counting function
\begin{equation}\label{ktuplepdcdef}
 G_{k}(x,D_{k})=\sum_{\substack{p\leq x-d_{k}\\p+d_{i}\hspace{0.05cm}are\hspace{0.05cm}all\hspace{0.05cm}primes\\1\leq i\leq k}}1
\end{equation}
attain its maximum
\begin{equation}\label{ktuplepdcdef1}
 G_{k}(x,D_{k}^{*})=\max_{D_{k}} G_{k}(x,D_{k}).
\end{equation}

Actually we obtain that the $k$-tuple PDCs go to infinity and have many prime factors.
\begin{theorem} \label{mainthm1.1}
Let $k$ be any given positive integer and $x$ be a sufficiently large real number. Suppose $D_{k}^{*}=\{d_{1}^*,\cdots,d_{k}^*\}$ is a $k$-tuple PDC for all primes $\le x$ and  $d^{*}$ is the greatest common divisor of all its elements. Then $d^{*}\rightarrow\infty$ as $x\rightarrow\infty$ and the number of distinct prime factors of $d^{*}$ also goes to infinity as $x\rightarrow\infty$.
\end{theorem}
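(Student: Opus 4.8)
The plan is to adapt the strategy of Funkhouser et al.\ \cite{FGSS} (the $k=1$ case) to the $k$-tuple setting, working entirely unconditionally via sieve upper bounds and a comparison with primorial-type difference sets. The basic principle is a two-sided squeeze: on one hand, a carefully chosen candidate set $D_k$ built from a primorial gives a lower bound for $G_k(x,D_k^*)$; on the other hand, any set $D_k$ whose elements have small gcd, or whose gcd has few prime factors, admits a Selberg/Brun sieve upper bound for $G_k(x,D_k)$ that is too small to compete. Concretely, I would first record the elementary heuristic weight: for a set $\{0,d_1,\dots,d_k\}$ the expected count of $p\le x$ with all $p+d_i$ prime is governed by the singular series $\mathfrak{S}(0,d_1,\dots,d_k)$, and more importantly by the purely local factor measuring for how many residue classes mod small primes $q$ the tuple avoids covering all of $\mathbb{Z}/q\mathbb{Z}$. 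If $q\mid d^*:=\gcd(d_1,\dots,d_k)$ then $\{0,d_1,\dots,d_k\}$ occupies only the residue class $0$ mod $q$, contributing a local factor $\approx (1-1/q)^{-k}$ relative to a ``generic'' admissible tuple; accumulating these over all primes $q\mid d^*$ shows that sets with $d^*$ divisible by a long primorial are heavily favoured.

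The first real step is the lower bound. I would take $D_k^{\mathrm{prim}}$ to be something like $\{P, 2P, \dots, kP\}$ where $P=\prod_{p\le z}p$ is a primorial with $z=z(x)\to\infty$ slowly (e.g.\ $z=\log\log x$ or a small power of $\log x$), and invoke the unconditional lower bound for the number of primes $p\le x$ with $p+iP$ prime for all $i\le k$. Since we are not assuming Hardy--Littlewood, this lower bound cannot come from an asymptotic; instead I would use the Goldston--Pintz--Yıldırım / Maynard--Tao machinery, or more simply the observation that it suffices to bound $G_k(x,D_k^*)$ \emph{from below by some explicit positive quantity} that already forces $d^*$ to be large. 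In fact the cleanest route avoids even needing primes at $p+iP$: one shows $G_k(x,D_k^*)\ge G_k(x,D_k)$ for a \emph{specific} good $D_k$, and for that specific $D_k$ one has a lower bound of the shape $G_k(x,D_k)\gg \pi_k(x)\cdot C(z)$ where $C(z)\to\infty$; but since such unconditional lower bounds on $k$-fold prime tuples are not available, the honest plan is to compare $G_k(x,D_k^{\mathrm{prim}})$ with $G_k(x,D_k^{\mathrm{bad}})$ directly. Here I expect the paper instead uses a cleverer elementary lower bound: count pairs — for each prime $p\le \sqrt{x}$ and each of the $\gg x/(p\cdot\text{stuff})$ multiples, one manufactures many solutions; alternatively bound $G_k(x,D_k^*)$ below by averaging $G_k(x,D_k)$ over a family of $D_k$'s, using that $\sum_{D_k} G_k(x,D_k)$ counts $(k+1)$-subsets of primes and is $\asymp \pi(x)^{k+1}/x^{?}$ by a mean-value / circle-method-free argument.

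The second and decisive step is the upper bound (Selberg sieve). For an arbitrary admissible $D_k=\{d_1,\dots,d_k\}$ one has
\begin{equation}\label{sieveub}
G_k(x,D_k)\ \ll_k\ \frac{x}{(\log x)^{k+1}}\prod_{q}\frac{1-\nu_q/q}{(1-1/q)^{k+1}},
\end{equation}
where $\nu_q$ is the number of residue classes modulo $q$ occupied by $\{0,d_1,\dots,d_k\}$; this is the Brun--Titchmarsh/Selberg bound and is completely unconditional. For $q\nmid d^*$ one has, for all but finitely many $q$, $\nu_q\ge 2$, making the $q$-factor $\le$ something bounded; but for $q\mid d^*$ we have $\nu_q=1$ and the factor is $(1-1/q)^{-k}\le (1-1/q)^{-k}$, i.e.\ it is \emph{at most} what the primorial set achieves and actually the product over $q\mid d^*$ telescopes. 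The key inequality to extract is that $G_k(x,D_k^*)/\bigl(x/(\log x)^{k+1}\bigr)$ is bounded, while the lower bound from step one shows it is $\gg$ (number depending on how many small primes divide $d^*$); comparing forces $\prod_{p\le z}p \mid$-ish behavior. More precisely: if $d^*$ has only $r$ distinct prime factors all $\ge$ some bound, the sieve upper bound on $G_k(x,D_k^*)$ beats the lower bound on $G_k(x,D_k^{\mathrm{prim}})$ unless $r\to\infty$ and the primes are small — yielding both conclusions (that $d^*\to\infty$ and that $\omega(d^*)\to\infty$).

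The main obstacle, and the part requiring genuine care, is step one: producing an unconditional lower bound for $G_k(x,D_k^*)$ that is strong enough. Since no unconditional asymptotic or even lower bound of the right order is known for $k\ge 1$ prime $k$-tuples, I expect the argument must lower-bound the \emph{maximum} $G_k(x,D_k^*)$ without controlling any individual $G_k(x,D_k)$ — i.e.\ via the pigeonhole/averaging identity
\begin{equation}\label{avg}
\sum_{\substack{D_k\subset(0,H]\\|D_k|=k}} G_k(x,D_k)\ =\ \sum_{\substack{p,\,p+h_1,\dots,p+h_k\text{ prime}\\0<h_1<\cdots<h_k\le H}}1\ =\ \sum_{p\le x}\binom{\pi(p,p+H]}{k}+O(\cdots),
\end{equation}
so that $G_k(x,D_k^*)\ge \binom{H}{k}^{-1}\sum_p\binom{\pi(p,p+H]}{k}$, which by convexity (Jensen) and the prime number theorem is $\gg_k \pi(x)\,(H/\log x)^k/\binom{H}{k}\asymp_k x/(\log x)^{k+1}$ when $H\asymp\log x$. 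That gives a clean unconditional lower bound $G_k(x,D_k^*)\gg_k x/(\log x)^{k+1}$ with no hypothesis. One then runs the sieve upper bound \eqref{sieveub} against this: the ratio of the two is bounded above and below by absolute constants, and the $q$-by-$q$ analysis of the Euler product pins down that the arithmetic factor attached to $D_k^*$ must be within a constant of the maximal possible value, which is achieved exactly (up to constants) by taking $d^*$ divisible by the primorial of all primes up to some $z$ with $z\to\infty$. Turning ``the arithmetic factor is near-maximal'' into ``$d^*\to\infty$ and $\omega(d^*)\to\infty$'' is then a short computation: dropping any fixed prime $p_0$ from the product multiplies the achievable factor by a constant $<1$, so to stay within the required window one is forced to include more and more primes, and hence $d^*$ and $\omega(d^*)$ both diverge. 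I would expect a mild technical nuisance in handling the range where $H$, $z$, and the sieve level all interact, and in making the ``all but finitely many $q$ have $\nu_q\ge 2$'' statement uniform, but nothing conceptually deep beyond the averaging lower bound and the standard dimension-$(k+1)$ sieve.
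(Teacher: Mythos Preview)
Your two-sided squeeze is the right architecture, and the sieve upper bound \eqref{sieveub} is exactly what the paper uses. The gap is in your lower bound: the unrestricted averaging in \eqref{avg} is too weak to close the argument. From
\[
G_k(x,D_k^*)\ \gg_k\ \frac{x}{(\log x)^{k+1}}
\qquad\text{and}\qquad
G_k(x,D_k^*)\ \ll_k\ \mathfrak{S}(\{0\}\cup D_k^*)\,\frac{x}{(\log x)^{k+1}}
\]
you only obtain $\mathfrak{S}(\{0\}\cup D_k^*)\gg_k 1$. That is perfectly compatible with $d^*$ being bounded (any fixed admissible tuple has bounded nonzero singular series), so your final step --- ``dropping any fixed prime $p_0$ multiplies the achievable factor by a constant $<1$, so one is forced to include more and more primes'' --- does not go through: losing a constant factor costs nothing against a lower bound that is itself only a constant. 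To force $d^*\to\infty$ and $\omega(d^*)\to\infty$ you need $\mathfrak{S}(\{0\}\cup D_k^*)\to\infty$, and for that you need a lower bound for $G_k(x,D_k^*)$ that itself grows by a factor tending to infinity relative to $x/(\log x)^{k+1}$.

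The paper obtains exactly this extra growth by restricting the averaging to $D_k$ with \emph{every} $d_i$ divisible by a primorial $q=p_l^\sharp\asymp x/\log^2 x$. The point is the identity
\[
\sum_{1\le a\le q}\pi(x;q,a)^{k+1}
=\sum_{\substack{p_1,\dots,p_{k+1}\le x\\ p_1\equiv\cdots\equiv p_{k+1}\ (\mathrm{mod}\ q)}}1
=(k+1)!\sum_{\substack{0<d_1<\cdots<d_k\le x\\ q\mid d_i}}G_k(x,\{d_1,\dots,d_k\})+\text{(lower moments)},
\]
together with the elementary positivity inequality
\[
\sum_{a}\pi(x;q,a)^{k-1}\Bigl(\pi(x;q,a)-\tfrac{\pi(x)}{\phi(q)}\Bigr)^2\ge 0,
\]
which yields recursively $\sum_a\pi(x;q,a)^{k+1}\ge \pi(x)^{k+1}/\phi(q)^k+(\text{error})$. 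Pigeonholing over the $\le(x/q)^k$ admissible $D_k$ then gives
\[
G_k(x,D_k^*)\ \ge\ \frac{1}{k+1}\Bigl(\frac{q}{\phi(q)}\Bigr)^{k}\frac{x}{(\log x)^{k+1}}\bigl(1+o(1)\bigr),
\]
and since $q$ is a primorial, $(q/\phi(q))^k\sim (e^\gamma\log\log x)^k\to\infty$. This is the factor your argument is missing; once it is in place, the comparison with the sieve bound forces $\mathfrak{S}(\{0\}\cup D_k^*)\gg(\log\log x)^k$, and a short Euler-product analysis (bounding the contribution of primes dividing $\Delta_{\{0\}\cup D_k^*}$ but not $d^*$) gives $\sum_{p\mid d^*}1/p=\log\log\log x+O(1)$, hence both conclusions. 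Your Jensen step, even if made rigorous, produces at best the constant-order lower bound and cannot substitute for this restricted moment computation.
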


Let\begin{align}
\Delta_{\{0\}\cup D_{k}}=\prod_{0\leq j<i\leq k}\bigg(d_{i}-d_{j}\bigg)
\end{align}
with $d_{0}=0$.

The following theorem says that $d^*$ has few primes not dividing it, and so does $\Delta_{\{0\}\cup D_{k}^*}$.

\begin{theorem}\label{mainthm1.2}
For sufficiently large real number $x$, let $D_{k}^{*}$ be a $k$-tuple PDC for all primes $\le x$ and  $d^{*}$ be the greatest common divisor of all its elements. We have
\begin{equation}\label{importantinequation}
\sum_{{p\nmid d^*}\atop{p\le 2\log x}}\frac1p\le\log k!+k\log2+\log(2(k+1)^2)+o(1)
\end{equation}
and
\begin{align}\label{+1.7}
\sum_{{p\nmid \Delta_{\{0\}\cup D_{k}^*}}\atop{p\le 2\log x}}\frac1p\le\frac1k\log k!+\log2+\frac1k\log(2(k+1)^2)+o(1).
\end{align}
\end{theorem}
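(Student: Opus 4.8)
\section*{Proof proposal}

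The plan is to sandwich the champion count $G_k(x,D_k^*)$ between an upper bound coming from the Selberg sieve and a lower bound coming from an averaging/pigeonhole argument, and then to extract arithmetic information about $d^*$ and $\Delta_{\{0\}\cup D_k^*}$ from the resulting inequality for the singular series
$$\mathfrak S(\{0\}\cup D_k^*)=\prod_{q}\frac{1-\nu_q/q}{(1-1/q)^{k+1}},\qquad \nu_q=\#\big(\{0,d_1^*,\dots,d_k^*\}\bmod q\big).$$
Since an inadmissible tuple gives $G_k(x,\cdot)=O(1)$, the champion is admissible for $x$ large, so $\nu_q<q$ for every $q$. The Selberg sieve applied to the $k+1$ linear forms $p,\,p+d_1^*,\dots,p+d_k^*$ then yields, uniformly for $d_k^*$ up to a fixed power of $x$,
$$G_k(x,D_k^*)\le\big(2^{k+1}(k+1)!+o(1)\big)\,\mathfrak S(\{0\}\cup D_k^*)\,\frac{x}{(\log x)^{k+1}}.$$

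The step I expect to be the crux is the unconditional lower bound. Here I would fix a primorial $P=\prod_{\ell\le y}\ell$ with $y$ taken as large as possible subject to $P\le x^{1-\varepsilon}$ (so $y\sim\log x$), and consider the family of competitors $D_k=P\{e_1,\dots,e_k\}$ with $0<e_1<\cdots<e_k$ and $Pe_k\le x$. Summing $G_k(x,\cdot)$ over this family counts precisely the $(k+1)$-tuples of primes $p_0<p_1<\cdots<p_k\le x$ that all lie in one residue class modulo $P$, i.e. $\sum_{(a,P)=1}\binom{\pi(x;P,a)}{k+1}$ up to a negligible error; by convexity of the integer sequence $n\mapsto\binom{n}{k+1}$ and Jensen's inequality this is at least $\phi(P)\binom{(\pi(x)-\omega(P))/\phi(P)}{k+1}(1+o(1))$. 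The number of competitors is $\binom{\lfloor x/P\rfloor}{k}$, so the pigeonhole principle gives
$$G_k(x,D_k^*)=\max_{D_k}G_k(x,D_k)\ \ge\ \frac{1}{k+1}\Big(\frac{P}{\phi(P)}\Big)^{k}\frac{x}{(\log x)^{k+1}}\,(1+o(1)).$$
The essential point — and what makes the argument unconditional — is that this uses nothing beyond $\sum_{(a,P)=1}\pi(x;P,a)=\pi(x)-\omega(P)$, so $P$ may be taken almost as large as $x$: no Siegel--Walfisz or Bombieri--Vinogradov is needed. Combining the two displayed bounds and using Mertens' third theorem, $\frac{P}{\phi(P)}=e^{\gamma}\log y\,(1+o(1))\sim e^{\gamma}\log\log x$, we obtain
$$\mathfrak S(\{0\}\cup D_k^*)\ \ge\ \frac{1}{2^{k+1}(k+1)^{2}\,k!}\Big(\frac{P}{\phi(P)}\Big)^{k}(1+o(1)).$$

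To deduce \eqref{+1.7} I would combine this with the elementary inequality $\mathfrak S(\{0\}\cup D_k^*)\le\prod_{q\mid\Delta_{\{0\}\cup D_k^*}}(1-1/q)^{-k}$: if $q\nmid\Delta$ then $\nu_q=k+1$ and Bernoulli's inequality gives $\frac{1-(k+1)/q}{(1-1/q)^{k+1}}\le1$, while if $q\mid\Delta$ then $\nu_q\ge1$ and $\frac{1-\nu_q/q}{(1-1/q)^{k+1}}\le(1-1/q)^{-k}$. Hence $\prod_{q\mid\Delta}(1-1/q)^{-1}\gg\frac{P}{\phi(P)}$. Splitting the Mertens product at $2\log x$ — the tail $\prod_{q\mid\Delta,\ q>2\log x}(1-1/q)^{-1}=1+o(1)$ because $\Delta$ has $\le C_k\log x$ prime factors, while $\prod_{q\le2\log x}(1-1/q)^{-1}=e^{\gamma}\log(2\log x)(1+o(1))$, which is asymptotic to $\frac{P}{\phi(P)}$ — the factors $\frac{P}{\phi(P)}$ cancel, leaving $\prod_{q\le2\log x,\ q\nmid\Delta}(1-1/q)^{-1}\le\big(2^{k+1}(k+1)^{2}k!\big)^{1/k}(1+o(1))$. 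Taking logarithms and using $1/q\le-\log(1-1/q)$ gives exactly \eqref{+1.7}, since $\tfrac1k\log\big(2^{k+1}(k+1)^2k!\big)=\tfrac1k\log k!+\log2+\tfrac1k\log\big(2(k+1)^2\big)$.

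For \eqref{importantinequation} the strategy is the same, the extra input being a way to make the sieve estimate see the gcd $d^*$ rather than $\Delta$. I would start from $q\nmid d^*\iff q\nmid d_i^*$ for some $i\in\{1,\dots,k\}$, so that $\{q\nmid d^*\}=\bigcup_{i=1}^{k}\{q\nmid d_i^*\}$ and $\sum_{q\le2\log x,\,q\nmid d^*}\tfrac1q\le\sum_{i=1}^{k}\sum_{q\le2\log x,\,q\nmid d_i^*}\tfrac1q$; bounding each inner sum by $\tfrac1k\log\big(2^{k+1}(k+1)^2k!\big)+o(1)$ via the analogue of the above (using that $p$ is automatically coprime to $d^*$, so the sieve cost is concentrated on primes not dividing each $d_i^*$) and summing over $i$ produces the stated constant $\log k!+k\log2+\log\big(2(k+1)^2\big)+o(1)$, which is exactly $k$ times the $\Delta$-bound. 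The delicate points throughout are: keeping the two main estimates uniform as $d_k^*$ runs up to a power of $x$; verifying the convexity/Jensen step in the lower bound; and matching the numerical constant in the sieve bound with $\tfrac1{k+1}$ from the pigeonhole so as to reproduce the exact constants claimed. The genuinely new ingredient compared with the conditional results of \cite{FGSS,FW,GL1,GL2} is the primorial-dilation averaging, which replaces the Hardy--Littlewood asymptotics by an unconditional lower bound of the correct order $\gg(\log\log x)^k\,x/(\log x)^{k+1}$.
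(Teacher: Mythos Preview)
Your lower bound via Jensen on $\binom{\pi(x;P,a)}{k+1}$, the sieve upper bound, and your derivation of \eqref{+1.7} are essentially correct and coincide with the paper's argument (the paper obtains the same lower bound through a recursive inequality on power sums of $\pi(x;q,a)$ rather than Jensen, but the output and the rest of the deduction are identical).

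The argument for \eqref{importantinequation}, however, has a genuine gap. You write
\[
\sum_{\substack{q\le 2\log x\\ q\nmid d^*}}\frac{1}{q}\ \le\ \sum_{i=1}^{k}\sum_{\substack{q\le 2\log x\\ q\nmid d_i^*}}\frac{1}{q}
\]
and then claim that each inner sum is at most $\tfrac{1}{k}\log\!\big(2^{k+1}(k+1)^2k!\big)+o(1)$ ``via the analogue of the above''. But no such analogue is available: your bound \eqref{+1.7} controls primes not dividing $\Delta_{\{0\}\cup D_k^*}$, and since $d_i^*\mid \Delta_{\{0\}\cup D_k^*}$ one has $\{q:q\nmid \Delta\}\subseteq\{q:q\nmid d_i^*\}$, which is the wrong containment. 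A prime $q$ could divide some $d_j^*-d_i^*$ (hence $\Delta$) while dividing no $d_i^*$ at all, so the inner sums can be strictly larger than the $\Delta$-sum, and nothing you have proved bounds them individually. The vague parenthetical about the sieve cost being ``concentrated on primes not dividing each $d_i^*$'' does not supply a valid inequality.

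The paper closes this gap with a single extra observation about the singular series rather than a union bound over $i$: for any prime $p\nmid d^*$ one has $\nu_{\{0\}\cup D_k^*}(p)\ge 2$, hence
\[
\frac{1-\nu_p/p}{(1-1/p)^{k+1}}\ \le\ \frac{1-2/p}{(1-1/p)^{k+1}}\ \le\ (1-1/p)^{-(k-1)}.
\]
This yields the refined upper bound
\[
\mathfrak S(\{0\}\cup D_k^*)\ \le\ \prod_{p\mid d^*}\Big(1-\tfrac1p\Big)^{-k}\prod_{\substack{p\mid\Delta_{\{0\}\cup D_k^*}\\ p\nmid d^*}}\Big(1-\tfrac1p\Big)^{-(k-1)},
\]
which loses exactly one power of $(1-1/p)^{-1}$ at every prime $p\nmid d^*$. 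Combining this with your lower bound $\mathfrak S\ge (2^{k+1}k!(k+1)^2)^{-1}(P/\phi(P))^{k}(1+o(1))$ and rearranging exactly as you did for \eqref{+1.7} (the tail over $p\mid\Delta$, $p>p_l$ is again $1+o(1)$) gives
\[
\prod_{\substack{p\le 2\log x\\ p\nmid d^*}}\Big(1-\tfrac1p\Big)\ \ge\ \frac{1}{2^{k+1}k!(k+1)^2}\,(1+o(1)),
\]
and taking logarithms produces \eqref{importantinequation} directly, with the stated constant $\log k!+k\log 2+\log(2(k+1)^2)$. So the missing ingredient is not a new estimate on $G_k$ but simply this sharper local bound on the singular series at primes not dividing $d^*$.
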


The first two items in the right-hand side of \eqref{importantinequation} actually come from the coefficient of the well-known sieve upper bound given in \eqref{2.11}, and the case also arises in \eqref{+1.7}.

With an appropriate form of the Hardy-Littlewood Prime $k$-tuple Conjecture, we also prove that the $gcd$ of a $k$-tuple PDC is a square-free number containing any large primorial as factor when $x\rightarrow\infty$. This is presented formally in the following theorem.

\begin{theorem}\label{mainthm1.3}
Let $x$ be large enough. Suppose $D_{k}^{*}$ is a $k$-tuple PDC for all primes $\le x$ and  $d^{*}$ is the greatest common divisor of all its elements. Assuming the truth of Conjecture \ref{conj2.1}, we have that
 \begin{enumerate}[(i)]
   \item there is a constant $C_k$ depending on $k$ that all primes $\le C_k\log\log x$ divide $d^{*}$;
   \item $d^{*}$ is a square-free number.
 \end{enumerate}
\end{theorem}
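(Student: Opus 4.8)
The plan is to run the classical Erdős–Straus / Goldston–Ledoan argument conditionally on the Hardy–Littlewood Prime $k$-tuple Conjecture (Conjecture 2.1), comparing $G_k(x,D_k^*)$ against an explicit competitor obtained by multiplying the gcd by a prime, and then exploiting optimality to force a lower bound on the singular series $\mathfrak{S}(\{0\}\cup D_k^*)$. Under Conjecture 2.1 we expect $G_k(x,D_k) \sim \mathfrak{S}(\{0\}\cup D_k)\,x/(\log x)^{k+1}$ uniformly for $d_k$ in a suitable range (say $d_k \le (\log x)^{A}$, or whatever range the stated form of the conjecture allows), where $\mathfrak{S}(\{0\}\cup D_k) = \prod_p (1-\nu_p/p)(1-1/p)^{-(k+1)}$ and $\nu_p$ counts residues covered by $\{0\}\cup D_k \bmod p$. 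First I would record that the maximal value $G_k(x,D_k^*)$ is at least as large as the value attained by the "dilated primorial tuple" $\{m\cdot 2, m\cdot 6,\dots\}$ or more precisely $D_k = \{d_1^{\sharp},\dots,d_k^{\sharp}\}$ with each $d_i^{\sharp}$ a multiple of the primorial $P_r = \prod_{p\le p_r}p$; for such tuples $\nu_p = 1$ for all $p \le p_r$, which makes $\mathfrak{S}$ essentially as large as possible among tuples of that diameter. Combining the lower bound from this competitor with the upper bound $G_k(x,D_k^*) \ll \mathfrak{S}(\{0\}\cup D_k^*)\,x/(\log x)^{k+1}$ yields $\mathfrak{S}(\{0\}\cup D_k^*) \gg_k 1$ with an explicit constant, and in fact $\mathfrak{S}(\{0\}\cup D_k^*)$ cannot be much smaller than the supremum of $\mathfrak{S}$ over admissible $(k+1)$-tuples of comparable diameter.

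Next I would convert this lower bound on $\mathfrak{S}(\{0\}\cup D_k^*)$ into the two stated conclusions. For part (ii), squarefreeness: if $p^2 \mid d^*$ then $p \mid d_i^*$ for all $i$, so $\nu_p = 1$; this does not by itself hurt $\mathfrak{S}$. The obstruction to $p^2 \mid d^*$ must instead come from the \emph{diameter}: replacing $D_k^*$ by $D_k^*/p$ (legitimate since $p \mid \gcd$) produces an admissible tuple with the \emph{same} singular series but diameter reduced by a factor $p$, and since $G_k$ depends on the diameter through the size of the range available (a tuple of smaller diameter can be realized by more shifted primes $p \le x - d_k$, giving a strictly larger main term once the secondary dependence on $d_k$ is made explicit), optimality is contradicted unless $p \nmid \gcd$ beyond first power — here I would lean on the refined asymptotic in which the count is $\mathfrak{S}(\{0\}\cup D_k)(x-d_k)/(\log x)^{k+1}(1+o(1))$ together with the fact, following Theorem 1.1, that $d^* \to \infty$ so $d^* / p$ is still large. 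For part (i), divisibility by all small primes: if some prime $q \le C_k \log\log x$ did \emph{not} divide $d^*$, then $\nu_q \ge 2$ for $\{0\}\cup D_k^*$, so the local factor at $q$ is at most $(1-2/q)(1-1/q)^{-(k+1)}$, which is strictly smaller than the factor $(1-1/q)(1-1/q)^{-(k+1)} = (1-1/q)^{-k}$ it would contribute were $q \mid d^*$; iterating over all such $q$ the cumulative loss in $\mathfrak{S}(\{0\}\cup D_k^*)$ is by a factor $\prod_{q \le C_k\log\log x}(1 - O(1/q)) \asymp (\log\log\log x)^{-O(1)} \to 0$, contradicting $\mathfrak{S}(\{0\}\cup D_k^*) \gg_k 1$ once $C_k$ is chosen large enough relative to the implied constants. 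The bookkeeping is exactly parallel to the $k=1$ case in \cite{FGSS} and the consecutive-prime case in \cite{FW}.

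The main obstacle I anticipate is uniformity: Conjecture 2.1 as stated presumably gives the asymptotic for $G_k(x,D_k)$ only when $d_k$ (equivalently the diameter) is restricted to some range like $d_k \le R(x)$, and the competitor tuple built from a large primorial $P_r$ has diameter at least $P_r \sim e^{p_r}$, so to push $r$ (hence the conclusion $p_r \le C_k \log\log x$) as far as the theorem claims one must check that $e^{p_r} = e^{C_k\log\log x} = (\log x)^{C_k}$ stays inside the admissible range — this is precisely why the conclusion is $C_k \log\log x$ and not something larger, and getting the constant $C_k$ right requires carefully matching the error term in Conjecture 2.1 against the multiplicative loss $\prod_{q\le C_k\log\log x}(1-O(1/q))$. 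A secondary technical point is making the "diameter forces a larger count" heuristic in part (ii) rigorous: one needs the dependence of the main term on $d_k$ (through $x - d_k$ and through the range of summation) to dominate any possible fluctuation of $\mathfrak{S}$ under the dilation $D_k^* \mapsto p D_k^*$ — but since that dilation leaves $\mathfrak{S}$ \emph{unchanged}, the comparison is clean, and the argument closes. I would organize the write-up as: (1) state the conditional asymptotic with its uniformity range; (2) construct the primorial competitor and derive $\mathfrak{S}(\{0\}\cup D_k^*) \gg_k 1$; (3) deduce (i) by the local-factor loss computation; (4) deduce (ii) by the dilation argument.
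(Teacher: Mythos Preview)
Your overall framework---compare $D_k^*$ against a primorial competitor to get a near-maximal singular series, then use local-factor analysis for (i) and a dilation $D_k^*\mapsto D_k^*/p$ for (ii)---matches the paper's. But two steps do not close as written, and both trace back to a missing ingredient: you never localize $d_k^*$. The paper first proves (its Proposition~4.1) that
\[
(1-\delta)\frac{x}{\log^2 x}\;<\;d_k^*\;<\;(C+\epsilon)\frac{x}{\log\log x},
\]
and this two-sided bound is what makes both (i) and (ii) go through.

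For (ii), your dilation argument is correct in spirit, but ``$d^*\to\infty$ from Theorem~1.1'' is far too weak. Under Conjecture~2.1 the comparison of $G_k(x,D_k^*)$ with $G_k(x,\tfrac1p D_k^*)$ has identical singular series, so the only gain is in the main term, of size $\asymp \mathfrak{S}\cdot d_k^*/\log^{k+1}x$; this must beat the error $o(x/\log^{k+3}x)$, which forces $d_k^*\gg x/\log^2 x$. That lower bound is exactly what the paper extracts from Proposition~4.1 (case~4, comparing against $p'*D_k$ for a prime $p'\le\log x$ with $p'\nmid d$); without it the contradiction does not follow.

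For (i), your ``cumulative loss'' step is a logical slip. You want to show that \emph{every} prime $q\le C_k\log\log x$ divides $d^*$, so you must rule out a \emph{single} missing prime $q$; but a single missing $q\approx C_k\log\log x$ costs only a factor $1-1/(q-1)=1-O(1/\log\log x)$ in $\mathfrak{S}$, not a factor tending to $0$. The product $\prod_{q\le C_k\log\log x}(1-O(1/q))$ you write down is over \emph{all} small primes, not over the (possibly single) missing one, so it proves nothing. What is actually needed---and what the paper does---is a comparison sharp to within $1+O(1/\log\log x)$: one shows $\mathfrak{S}(\{0\}\cup D_k^*)\ge (1-O(1/\log\log x))\,\mathfrak{S}(\{0\}\cup\widetilde D_k)$ where $\widetilde D_k=\lfloor x/\log x\rfloor^\sharp*\{1,\dots,k\}$ (this uses the \emph{upper} bound on $d_k^*$ from Proposition~4.1), and simultaneously $\mathfrak{S}(\{0\}\cup p'*D_k^*)\le (1+O(1/\log\log x))\,\mathfrak{S}(\{0\}\cup\widetilde D_k)$ (Lemma~4.2 of the paper). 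Dividing gives $1+\tfrac{v-1}{p'-v}\le 1+O(1/\log\log x)$, hence $p'\gg\log\log x$. Your competitor of diameter $(\log x)^{C_k}$ is too small for this: its singular series is only of order $(\log\log\log x)^k$, whereas the paper's competitor of diameter $\asymp x/\log x$ attains the genuine maximum $\asymp(\log\log x)^k$, which is why the comparison is tight enough to detect a single missing prime. (Relatedly, Conjecture~2.1 is stated with uniformity up to $d_k\le \tfrac{2^{k+1}(k+1)!}{2^{k+1}(k+1)!+1}x$, not merely $d_k\le(\log x)^A$, and this full range is used.)
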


Theorem \ref{mainthm1.3} means that all primorials $\le (\log x)^{C_k}$ divide $d^*$. It might be expected that $d^*$ should run through primorials, while further endeavor implies that this seems to be not true. This is due to that the value of singular series is strongly affected by such primes $p$ that only part of elements in $D^*_k$ shares the same residue class modulo $p$.

In the remainder of this paper, $D_{k}$ will always denote a set with $k$ distinct elements of integers such that $d_{1}<d_{2}<\cdots<d_{k}$ and $D_{k}^{*}$ is such a set which is a $k$-tuple PDC for all primes $\le x$ with  $d^{*}$ the greatest common divisor of all its elements. Also, we will denote $D_{k}=d\ast D_{k}^{'}$, where $d$ is the $gcd$ of all elements in $D_{k}$ and $D_{k}^{'}=\{d_{1}^{'},\cdots,d_{k}^{'}\}$ with $d_{i}=dd_{i}^{'}$ for any $i\leq k$.
Throughout the paper the implied constants in $\mathit{O}, \gg, \ll$ and $\mathit{o}$ can depend on $k$. Besides, we let $\epsilon$ always denote an arbitrarily small positive number which may have different values according to the context. At last, two notations of the primorial will be used in alternation as required in the rest of this paper. One is $\lfloor x\rfloor^{\sharp}$ which means the largest primorial no greater than $x$ and the other is $p_{n}^{\sharp}=p_1p_2\cdots p_n$.

\section{The Hardy-Littlewood Prime $k$-tuple Conjecture} \label{titleHLconjecture }
In $1923$, a pioneering paper Partitio Numerioum III [9] was published. In that paper, Hardy and Littlewood created and developed an asymptotic and analytic method in additional number theory about the Goldbach's Conjecture. As mentioned, $D_{k}$ is a set with $k$ distinct integers and let $\pi(x,D_{k})$ denote the number of $n\leq x$ such that $n+d_{1},\cdots,n+d_{k}$ are all primes. They formulated an asymptotic formula for $\pi(x,D_{k})$ as follows.\par
Firstly define
\begin{equation} \label{liingeration}
li_{k}(x)=
\int_{2}^{x}\frac{1}{\log^{k} t}dt
\end{equation}
and
\begin{equation} \label{singularseriesdef}
\mathfrak{S}(D_{k})=\prod_p\bigg(1-\frac{1}{p}\bigg)^{-k}\bigg(1-\frac{{\textit{v}_{D_{k}}(p)}}{p}\bigg),
\end{equation}
where $p$ runs through all primes and $\textit{v}_{D_{k}}(p)$ represents the number of distinct residue classes modulo $p$ occupied by elements of $D_{k}$. If $\mathfrak{S}(D_{k})\neq 0$, then Hardy-Littlewood Prime $k$-tuple Conjecture tells us that
\begin{equation}\label{upperbound}
\pi(x,D_{k})\sim\mathfrak{S}(D_{k})\hspace{0.1cm}li_{k}(x),\hspace{0.15cm}as\hspace{0.15cm} x\rightarrow\infty.
\end{equation}\par
If $\textit{v}_{D_{k}}({p})=p$ for some primes, then $\mathfrak{S}(D_{k})=0$, in which case $\pi(x,D_{k})$ is bounded by $k$. It is sure that this case can not be a PDC, so we will ignore this case in the remainder of the paper without influence.

One may note  that the counting function $G_{k}(x,D_{k})$ we defined is somewhat different from $\pi(x,D_{k})$. By comparing their definitions, we can find the fact that
 \begin{equation}\label{Gk}
 G_{k}(x,D_{k})=\pi(x-d_{k},\{0\}\cup D_{k}).
 \end{equation}
 In this case, we have the Hardy-Littlewood Prime $k$-tuple Conjecture for $G_{k}(x,D_{k})$ as
\begin{equation}
G_{k}(x,D_{k})\sim \mathfrak{S}(\{0\}\cup D_{k})li_{k+1}(x-d_{k}),~ \text{as}~x\rightarrow\infty.
\end{equation}
Thus one could state the conjecture as
\begin{equation}
G_{k}(x,D_{k})=\mathfrak{S}(\{0\}\cup D_{k})li_{k+1}(x-d_{k})+E(x,D_{k}),~ \text{as}~ x\rightarrow\infty,
\end{equation}
where $E(x,D_{k})$ represents an error term.

Although  Hardy and Littlewood did not specifically consider the situation where $d=d(x)\rightarrow\infty$ in their original conjecture, it is reasonable to suppose the Hardy-Littlewood Prime $k$-tuple Conjecture will hold in this situation. To obtain our conditional results in Theorem \ref{mainthm1.3}, the following form of the Hardy-Littlewood $k$-tuple Conjecture will be needed.
\begin{Conjecture}\label{conj2.1}
If $\mathfrak{S}(\{0\}\cup D_{k})\neq 0$, then
\begin{equation}\label{HLconjecture1}
G_{k}(x,D_{k})=\mathfrak{S}(\{0\}\cup D_{k})Li_{k+1}(x,D_{k})+E(x,D_{k}),\hspace{0.15cm}as\hspace{0.15cm} x\rightarrow\infty,
\end{equation}
where
\begin{equation}\label{li}
Li_{k+1}(x,D_{k})=
\int_{2}^{x-d_{k}}\frac{1}{\log^{k+1} t}dt
\end{equation}
and the error term
\begin{equation}\label{errorterm}
E(x,D_{k})=\mathit{o}\bigg(\frac{x}{\log^{k+3}x}\bigg)
\end{equation}
holds uniformly for $D_{k}\subset\Big[2,\frac{2^{k+1}(k+1)!}{2^{k+1}(k+1)!+1}x\Big]$.
\end{Conjecture}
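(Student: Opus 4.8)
The statement \eqref{HLconjecture1} is a quantitative and uniform instance of the Hardy--Littlewood prime $k$-tuple conjecture applied to the shifted tuple $\{0\}\cup D_{k}$, and I should stress at the outset that no unconditional proof is available for any $k\ge 1$; what I can offer is a heuristic derivation of the main term together with a plan for a genuine proof and an honest account of where it breaks down. The plan is to begin from the probabilistic model for primes. A single integer $n$ is prime with ``probability'' $1/\log n$, so the naive expectation that $n,n+d_{1},\dots,n+d_{k}$ are simultaneously prime is $\log^{-(k+1)}n$. This model is inaccurate at each prime $p$ because the residues of the tuple modulo $p$ are not equidistributed: the true local factor is the probability $1-v_{\{0\}\cup D_{k}}(p)/p$ that $p$ divides none of the $k+1$ entries, normalized by its generic value $(1-1/p)^{k+1}$. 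Taking the product over all $p$ produces exactly $\mathfrak{S}(\{0\}\cup D_{k})$ as defined in \eqref{singularseriesdef}, and summing the corrected density $\mathfrak{S}(\{0\}\cup D_{k})\log^{-(k+1)}n$ over $2\le n\le x-d_{k}$, then replacing the sum by an integral, yields the main term $\mathfrak{S}(\{0\}\cup D_{k})\,Li_{k+1}(x,D_{k})$ appearing in \eqref{li}.

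To convert this heuristic into a theorem, the natural framework is the Hardy--Littlewood circle method. I would write $G_{k}(x,D_{k})=\pi(x-d_{k},\{0\}\cup D_{k})$ as an integral over the circle of a product of exponential sums over primes against additive characters, and split the range into major arcs around rationals $a/q$ with $q$ small and minor arcs forming the complement. On the major arcs the Gauss-sum and singular-integral analysis recovers precisely $\mathfrak{S}(\{0\}\cup D_{k})\,Li_{k+1}(x,D_{k})$: the singular series emerges as the sum of the local densities over the major-arc denominators, and the singular integral reproduces the logarithmic integral. The entire burden of the theorem, and in particular the error term \eqref{errorterm}, then falls on the minor-arc contribution.

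The hard part---and the reason the statement must remain conjectural---is that the minor arcs cannot be controlled. Estimating a correlation such as $\pi(\cdot,\{0\}\cup D_{k})$ amounts to detecting primality in $k+1$ linear forms in the \emph{single} variable $n$, so the circle method has no auxiliary averaging variable with which to beat the trivial bound; equivalently, sieve methods run into the parity obstruction, which yields upper bounds of the correct order of magnitude (the Selberg-sieve bound behind \eqref{2.11}) and, since the Goldston--Pintz--Y{\i}ld{\i}r{\i}m and Maynard--Tao breakthroughs, positive lower bounds for infinitely many admissible configurations, but never the asymptotic with a power-of-logarithm saving in the error. A further difficulty is that \eqref{HLconjecture1} demands this asymptotic \emph{uniformly} for $D_{k}\subset\big[2,\tfrac{2^{k+1}(k+1)!}{2^{k+1}(k+1)!+1}x\big]$, i.e.\ allowing shifts nearly as large as $x$; even granting the pointwise conjecture, such uniformity is substantially stronger and lies well beyond present methods. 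For these reasons the statement is adopted as a hypothesis, Conjecture \ref{conj2.1}, and is invoked only to deduce the conditional conclusions of Theorem \ref{mainthm1.3}.
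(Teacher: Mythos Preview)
Your assessment is correct: the statement is labeled and treated in the paper as a conjecture, not a theorem, and the paper offers no proof of it whatsoever---it simply motivates \eqref{HLconjecture1} as a natural uniform refinement of the classical Hardy--Littlewood asymptotic \eqref{upperbound} and then assumes it in order to derive Theorem~\ref{mainthm1.3}. Your heuristic derivation of the main term and your explanation of why the error bound \eqref{errorterm} is beyond current technology go further than the paper itself, which does not attempt even a heuristic justification; there is nothing to compare against and nothing to correct.
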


A strong form of Conjecture \ref{conj2.1} is that for $2\le d_k\le x-x^\epsilon$
\begin{align}
E(x,D_k)\ll(x-d_k)^{\frac12+\epsilon},
\end{align}
while the Conjecture \ref{conj2.1} is enough for us to obtain the conditional results in Theorem \ref{mainthm1.3}.

We will also need the following well-known sieve upper bound:  for any positive integer $k$ and
sufficiently large real $x$,
\begin{equation}\label{2.11}
\pi(x,D_{k})\leq(2^{k}k!+\epsilon)\mathfrak{S}(D_{k})\frac{x}{\log^{k}x}
\end{equation}
uniformly for all $D_k$ with $\mathfrak{S}(D_{k})\neq 0$, which was given in Halberstam and Richert's excellent monograph \cite{HR}.

\section{Unconditional results}
In this section we prove Theorem \ref{mainthm1.1} and Theorem \ref{mainthm1.2}, and the proofs are based on a lower bound of a sum on some large $G_k(x,D_k)$. We deduce this lower bound from a careful discussion on the $k$-th mean of $\pi(x;q,a)$ over $a$ for any positive integer $k$.

At first, we deduce a recursive relation and a lower bound for the $k$-th mean of $\pi(x;q,a)$ over $a$ in the following lemma.

\begin{lemma}\label{lemma3.1}
Let $1\le q\le x$ and $\pi(x;q,a)$ denote the number of primes $\leq x$ which are congruent to $a$ modulo $q$. For any given positive integer $k\ge2$, we have
\begin{align}\label{3.1}
\sum_{1\leq a\leq q}\pi(x;q,a)^{k}\ge\frac{\pi(x)}{\phi(q)}\sum_{1\leq a\leq q}\pi(x;q,a)^{k-1}-\mathcal{E}_k(x,q),
\end{align}
moreover for any $k\ge1$
\begin{align}\label{3.2}
\sum_{1\leq a\leq q}\pi(x;q,a)^{k}\ge\frac{\pi^k(x)}{\phi^{k-1}(q)}-\mathcal{E}_k(x,q),
\end{align}
where $\mathcal{E}_k(x,q)$  denotes an error term which may have different values according to context but meets
\begin{align}
\mathcal{E}_k(x,q)\ll \frac{\pi^{k-1}(x)}{\phi^{k-1}(q)}\log q
\end{align}
all the time.
\end{lemma}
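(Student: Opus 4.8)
The plan is to prove \eqref{3.1} by a convexity/Cauchy--Schwarz-type argument applied to the distribution of primes in residue classes, and then to deduce \eqref{3.2} from \eqref{3.1} by induction on $k$. First I would isolate the main term. Among the $\phi(q)$ reduced residue classes modulo $q$, write $\pi(x;q,a)=\frac{\pi(x)}{\phi(q)}+r(a)$ for $\gcd(a,q)=1$, where $\sum_{\gcd(a,q)=1}r(a)=O(\omega(q))=O(\log q)$ accounts for the finitely many primes dividing $q$ (each contributing to exactly one non-reduced class). The key elementary inequality is that for nonnegative reals $t_1,\dots,t_m$ with mean $\bar t$, one has $\sum_i t_i^{k}\ge \bar t\sum_i t_i^{k-1}$; this is just Chebyshev's sum inequality (the sequences $t_i^{k-1}$ and $t_i$ are similarly ordered), or alternatively it follows from $\sum_i t_i^{k}\cdot m\ge \big(\sum_i t_i\big)\big(\sum_i t_i^{k-1}\big)$. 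Applying this with $m=\phi(q)$, $t_a=\pi(x;q,a)$ over the reduced classes, and $\bar t=\pi(x)/\phi(q)$ (up to the $O(\log q)$ discrepancy in $\sum_a t_a$) yields
\[
\sum_{\gcd(a,q)=1}\pi(x;q,a)^{k}\ge\frac{\pi(x)}{\phi(q)}\sum_{\gcd(a,q)=1}\pi(x;q,a)^{k-1}-O\!\left(\frac{\pi^{k-1}(x)}{\phi^{k-1}(q)}\log q\right),
\]
since replacing $\sum t_a$ by $\pi(x)$ costs $O(\log q)$ copies of the typical size $(\pi(x)/\phi(q))^{k-1}$. Adding back the non-reduced classes only increases the left side and changes the $(k-1)$-st moment sum on the right by a negligible amount, absorbed into $\mathcal{E}_k$; this gives \eqref{3.1}.

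For \eqref{3.2} I would induct on $k$. The base case $k=1$ is the trivial identity $\sum_{a}\pi(x;q,a)=\pi(x)+O(\log q)$, hence $\ge \pi(x)-\mathcal E_1(x,q)$ with $\mathcal E_1\ll \log q$, consistent with the stated bound (interpreting $\pi^0=\phi^0=1$). For the inductive step, apply \eqref{3.1} and then the induction hypothesis to the $(k-1)$-st moment:
\[
\sum_{a}\pi(x;q,a)^{k}\ge\frac{\pi(x)}{\phi(q)}\left(\frac{\pi^{k-1}(x)}{\phi^{k-2}(q)}-\mathcal E_{k-1}(x,q)\right)-\mathcal E_k(x,q)=\frac{\pi^{k}(x)}{\phi^{k-1}(q)}-\mathcal E_k'(x,q),
\]
where $\mathcal E_k'(x,q)=\frac{\pi(x)}{\phi(q)}\mathcal E_{k-1}(x,q)+\mathcal E_k(x,q)\ll \frac{\pi^{k-1}(x)}{\phi^{k-1}(q)}\log q$, using the inductive bound $\mathcal E_{k-1}\ll \pi^{k-2}(x)\phi^{2-k}(q)\log q$. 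Thus the error bound propagates with the same shape, completing the induction.

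The main obstacle — really the only delicate point — is bookkeeping the error term $\mathcal{E}_k(x,q)$ uniformly in $q$ across the range $1\le q\le x$, in particular making sure that (a) the discrepancy between $\sum_a \pi(x;q,a)$ and $\pi(x)$ (the contribution of primes dividing $q$) is genuinely $O(\log q)$ and not larger, and (b) when I multiply the $(k-1)$-st moment error by $\pi(x)/\phi(q)$ in the induction, the product stays $\ll \pi^{k-1}(x)\phi^{1-k}(q)\log q$ rather than acquiring an extra factor. Point (a) is handled by noting $\sum_{p\mid q}1=\omega(q)\ll \log q$ and each such prime contributes at most $1$ to a single class; point (b) is automatic since $\pi(x)/\phi(q)$ times $\pi^{k-2}(x)\phi^{2-k}(q)$ equals exactly $\pi^{k-1}(x)\phi^{1-k}(q)$. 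No deep input about primes in arithmetic progressions (Bombieri--Vinogradov, Siegel--Walfisz, etc.) is needed — the lemma is purely a consequence of the nonnegativity of $\pi(x;q,a)$, the trivial count of primes dividing $q$, and Chebyshev's sum inequality — which is exactly why it can be stated unconditionally and uniformly.
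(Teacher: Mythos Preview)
Your Chebyshev sum inequality does give the clean relation
\[
\sum_{(a,q)=1}\pi(x;q,a)^{k}\ \ge\ \frac{A_1(x,q)}{\phi(q)}\sum_{(a,q)=1}\pi(x;q,a)^{k-1},
\qquad A_1(x,q)=\pi(x)-\omega_x(q),
\]
with $\omega_x(q)=\#\{p\le x:p\mid q\}\ll\log q$. The gap is in the sentence ``replacing $\sum t_a$ by $\pi(x)$ costs $O(\log q)$ copies of the typical size $(\pi(x)/\phi(q))^{k-1}$.'' The actual cost is $\frac{\omega_x(q)}{\phi(q)}A_{k-1}(x,q)$, and to make this $\ll\frac{\pi^{k-1}(x)}{\phi^{k-1}(q)}\log q$ you would need the \emph{upper} bound $A_{k-1}(x,q)\ll\pi^{k-1}(x)/\phi^{k-2}(q)$. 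That is a reverse power-mean inequality: Jensen only gives the lower bound, and unconditionally nothing prevents the $(k-1)$st moment from being much larger (concentration of primes in a few classes). So your argument does not establish \eqref{3.1} with the stated size of $\mathcal E_k(x,q)$, and since you deduce \eqref{3.2} from \eqref{3.1}, that derivation inherits the gap.

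The paper sidesteps exactly this difficulty. Instead of Chebyshev it expands
\[
\sum_{(a,q)=1}\pi(x;q,a)^{k-2}\Bigl(\pi(x;q,a)-\tfrac{\pi(x)}{\phi(q)}\Bigr)^{2}\ \ge\ 0,
\]
which yields $B_k\ge\frac{\pi(x)}{\phi(q)}B_{k-1}$ for $B_j:=A_j-\frac{\pi(x)}{\phi(q)}A_{j-1}$. Iterating this down to $B_1=O(\log q)$ gives $B_k\ge -C\bigl(\pi(x)/\phi(q)\bigr)^{k-1}\log q$, i.e.\ \eqref{3.1} with the correct error, without ever needing an upper bound on any $A_j$. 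Your Chebyshev route \emph{can} be salvaged for \eqref{3.2} alone: iterate $A_k\ge\frac{A_1}{\phi(q)}A_{k-1}$ all the way to $A_k\ge A_1^{k}/\phi(q)^{k-1}$ and then expand $A_1^{k}=(\pi(x)-O(\log q))^{k}$; the error then genuinely is $O\bigl(\pi^{k-1}(x)\phi^{1-k}(q)\log q\bigr)$. But for \eqref{3.1} itself (which is what the paper actually uses downstream), you need the square-expansion recursion rather than a single application of Chebyshev.
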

Only formula (\ref{3.1}) will be used to prove our theorems. However, we present \eqref{3.2} in the lemma since it is interesting itself.
\begin{proof}
For $k,~q,~x$ as above, we firstly define $A_k(x,q)$ as follows,
\begin{equation}
\begin{split}
A_{0}(x,q)=\phi(q)=\sum_{{1\le a\le q}\atop{(a,q)=1}}1
\end{split}
\end{equation}
and
\begin{equation}
\begin{split}
A_{k}(x,q)=\sum_{{1\leq a\leq q}\atop{(a,q)=1}}\pi(x;q,a)^{k}
\end{split}
\end{equation}
for $k\ge1$.

Also we define $B_k(x,q)$ by
\begin{equation}\label{3.4}
\begin{split}
B_{k}(x,q)=A_{k}(x,q)-\frac{\pi(x)}{\phi(q)}A_{k-1}(x,q)
\end{split}
\end{equation}
for $k\ge1$.  Since $\pi(x;q,a)=0$ or $1$ for $(a,q)>1$, we have
\begin{align}\label{+3.6}
\sum_{{1\le a\le q}\atop{(a,q)>1}}\pi(x;q,a)^k\le\sum_{p\mid q}1\ll\log q
\end{align}
for any $k\ge1$. Specially, it is easy to see from \eqref{+3.6} that
\begin{align}
A_1(x,q)=\sum_{1\le a\le q}\pi(x;q,a)-\sum_{{1\le a\le q}\atop{(a,q)>1}}\pi(x;q,a)=\pi(x)+O(\log q).
\end{align}
This means that
\begin{equation}
\begin{split}
B_{1}(x,q)=A_{1}(x,q)-\frac{\pi(x)}{\phi(q)}A_{0}(x,q)=O(\log q).
\end{split}
\end{equation}

For $k\ge2$, note that
\begin{equation}
\begin{split}
\sum_{{1\leq a\leq q}\atop{(a,q)=1}}\pi(x;q,a)^{k-2}\bigg(\pi(x;q,a)-\frac{\pi(x)}{\phi(q)}\bigg)^{2}\geq 0.
\end{split}
\end{equation}
Expanding the parenthesis above we have
\begin{equation}\label{Ak}
\begin{split}
B_{k}(x,q)-\frac{\pi(x)}{\phi(q)}B_{k-1}(x,q)\ge0.\\
\end{split}
\end{equation}
Since $B_1(x,q)=O(\log q)$, we can conclude that
\begin{equation}\label{3.8}
\begin{split}
B_{k}(x,q)\ge0-\mathcal{E}_k(x,q)
\end{split}
\end{equation}
for any $k\ge2$, which means that
\begin{align}
\sum_{{1\leq a\leq q}\atop{(a,q)=1}}\pi(x;q,a)^{k}\ge\frac{\pi(x)}{\phi(q)}\sum_{{1\leq a\leq q}\atop{(a,q)=1}}\pi(x;q,a)^{k-1}-\mathcal{E}_k(x,q).
\end{align}
Then due to \eqref{+3.6} we may drop the condition $(a,q)=1$ in both sides above to obtain \eqref{3.1}.

Employing the definition of $B_k$ in (\ref{3.8}) we have
\begin{equation}
\begin{split}
A_{k}(x,q)-\frac{\pi(x)}{\phi(q)}A_{k-1}(x,q)\ge0-\mathcal{E}_k(x,q)
\end{split}
\end{equation}
for any $k\ge2$.
Since $A_1(x,q)=\pi(x)+O(\log q)\ge\pi(x)-\mathcal{E}_1(x,q)$ and 
\begin{align}
\sum_{i=1}^{k-1}\bigg(\frac{\pi(x)}{\phi(q)}\bigg)^{k-i}\mathcal{E}_i(x,q)=\mathcal{E}_k(x,q), 
\end{align}
we have by induction that
\begin{equation}
\begin{split}
A_{k}(x,q)\ge\frac{\pi^k(x)}{\phi^{k-1}(q)}-\mathcal{E}_k(x,q),
\end{split}
\end{equation}
which means
\begin{equation}\label{PiK}
\begin{split}
\sum_{{1\leq a\leq q}\atop{(a,q)=1}}\pi(x;q,a)^{k}\ge\frac{\pi^k(x)}{\phi^{k-1}(q)}-\mathcal{E}_k(x,q)
\end{split}
\end{equation}
for any $k\ge1$. Then due to \eqref{+3.6} we may drop the condition $(a,q)=1$ in the left-hand side above and obtain \eqref{3.2}.
\end{proof}

We now come to deduce the lower bound for a sum on some large $G_k(x,D_k)$ from the recursive relation provided in Lemma \ref{lemma3.1}.
\begin{lemma}\label{lembelowbound}
Let $1\leq q\leq \frac{x}{\log^{2}x}$. For any given integer $k\geq 1$ we have
\begin{equation}\label{lembelowbound1}
\begin{split}
&\sum_{{1< d_{1}<d_{2}\cdots<d_{k}\leq x}\atop{q\mid d_{i},1\leq i\leq k}}G_{k}(x,\{d_{1},d_{2},\cdots,d_{k}\})\ge\frac1{(k+1)!}\frac{x^{k+1}}{\phi(q)^k\log^{k+1}x}(1+o(1)).
\end{split}
\end{equation}
\end{lemma}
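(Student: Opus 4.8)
The plan is to reduce the left-hand side of \eqref{lembelowbound1} to the $(k+1)$-st moment $\sum_{a}\pi(x;q,a)^{k+1}$ of primes in progressions, and then to apply the recursive bound \eqref{3.1} of Lemma \ref{lemma3.1} iteratively. First I would observe that, by the definition \eqref{ktuplepdcdef} of $G_k$ together with \eqref{Gk}, the quantity $G_k(x,\{d_1,\dots,d_k\})$ with all $d_i$ divisible by $q$ counts primes $p\le x-d_k$ such that $p,p+d_1,\dots,p+d_k$ are all prime, and since the $d_i$ are multiples of $q$ every such $p+d_i$ lies in the same residue class $p\bmod q$. Hence, summing over ordered tuples $0<d_1<\cdots<d_k\le x$ with $q\mid d_i$ is, up to boundary losses of size $d_k\le x$ in the range of $p$, the same as counting, for each residue class $a\pmod q$, the number of ways to choose a prime $p\equiv a$ together with $k$ further distinct primes $\equiv a\pmod q$ in an interval of length about $x$ — that is, roughly $\tfrac1{k!}\bigl(\pi(x;q,a)-\text{something}\bigr)\cdots$, an ordered $k$-subset count. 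More precisely I would show
\[
\sum_{\substack{0<d_1<\cdots<d_k\le x\\ q\mid d_i}}G_k(x,\{d_1,\dots,d_k\})
=\frac1{k!}\sum_{1\le a\le q}\pi(x;q,a)\bigl(\pi(x;q,a)-1\bigr)\cdots\bigl(\pi(x;q,a)-k\bigr)+(\text{error}),
\]
where the error absorbs the truncation $p\le x-d_k$ versus $p\le x$, the distinctness conditions, and the non-coprime residues, all of which are lower-order given $q\le x/\log^2 x$.

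Next I would discard the lower-order terms in the falling factorial and reduce to bounding $\sum_a \pi(x;q,a)^{k+1}$ from below, since $\pi(x;q,a)(\pi(x;q,a)-1)\cdots(\pi(x;q,a)-k)\ge \pi(x;q,a)^{k+1}-C_k\,\pi(x;q,a)^{k}$ and the subtracted sum $\sum_a\pi(x;q,a)^k$ is itself controlled. To get the lower bound on the top moment I apply \eqref{3.1} of Lemma \ref{lemma3.1} with exponent $k+1$, then with $k$, and so on down to the base case $\sum_a\pi(x;q,a)\ge\pi(x)-O(\log q)$; this telescopes to
\[
\sum_{1\le a\le q}\pi(x;q,a)^{k+1}\ge\frac{\pi(x)^{k+1}}{\phi(q)^{k}}-\mathcal{E}_{k+1}(x,q),
\]
which is exactly \eqref{3.2} with $k$ replaced by $k+1$ — indeed I could just quote \eqref{3.2} directly. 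Finally I substitute $\pi(x)\sim x/\log x$ from the Prime Number Theorem, note that the error term $\mathcal{E}_{k+1}(x,q)\ll \pi(x)^k\phi(q)^{-k}\log q$ is $o\bigl(\pi(x)^{k+1}\phi(q)^{-k}\bigr)$ because $\log q\ll\log x=o(\pi(x))$, combine with the $\tfrac1{k!}$ from the ordered-subset count and an extra factor roughly $\tfrac1{k+1}$ coming from the passage between "$k$ distinct primes above $p$" and "$k+1$ primes total" — more carefully, the falling-factorial identity produces the constant $\tfrac1{k!}$ and the PNT main term delivers $\tfrac1{(k+1)}\cdot\tfrac{x^{k+1}}{\log^{k+1}x}$ after accounting for how the $(k+1)$-st power of $\pi(x)$ distributes — to arrive at the claimed $\tfrac1{(k+1)!}\,x^{k+1}\phi(q)^{-k}\log^{-k-1}x\,(1+o(1))$.

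The main obstacle I expect is the bookkeeping in the first step: making the passage from $\sum_{d_i}G_k$ to the falling-factorial moment genuinely an inequality (a lower bound suffices, so I only need $\ge$) while keeping all of the discarded pieces — the shortening of the $p$-range from $x$ to $x-d_k$, the requirement that the $d_i$ be \emph{distinct} and \emph{positive} (so that e.g. $d_1>0$ excludes $p+d_1=p$), and the contribution of residue classes $a$ with $(a,q)>1$ — provably of smaller order than the main term $x^{k+1}\phi(q)^{-k}\log^{-k-1}x$. The range restriction $q\le x/\log^2 x$ is presumably exactly what is needed to make $\phi(q)$ not too small relative to these savings; I would need to check that the truncation error, which is at worst $\sum_a k\cdot x\cdot\pi(x;q,a)^{k}$-type, divided by the main term, tends to $0$ in this range. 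Everything after that is a routine combination of Lemma \ref{lemma3.1} and the Prime Number Theorem.
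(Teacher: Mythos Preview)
Your overall strategy is sound and is actually more direct than the paper's. The paper does \emph{not} pass through the binomial moment $\sum_a\binom{\pi(x;q,a)}{k+1}$; instead it expands each power moment $\sum_a\pi(x;q,a)^{j}$ as a Stirling-type linear combination of sums over tuples of \emph{distinct} primes (equations \eqref{3.15}, \eqref{3.18}), feeds these expansions into the recursion \eqref{3.1}, discards nonnegative terms to obtain
\[
\sum_{\substack{d_1<\cdots<d_k\\ q\mid d_i}} G_k \;\ge\; \frac{1}{k+1}\Bigl(\frac{\pi(x)}{\phi(q)}-\tbinom{k+1}{2}\Bigr)\sum_{\substack{d_1<\cdots<d_{k-1}\\ q\mid d_i}} G_{k-1} \;+\; O\!\left(\frac{\pi(x)^k}{\phi(q)^k}\log q\right),
\]
and then inducts on $k$ with the base case $k=1$ taken from \cite{FGSS}. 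Your route---identify the left side with a falling-factorial moment and quote \eqref{3.2} once---skips the induction entirely; the paper's route, on the other hand, never needs an \emph{upper} bound on a lower moment.

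Three things to clean up. First, the constant: the sum on the left counts unordered $(k{+}1)$-subsets of primes $\le x$ in a fixed residue class, so the identity is
\[
\sum_{\substack{0<d_1<\cdots<d_k\le x\\ q\mid d_i}} G_k(x,\{d_1,\dots,d_k\})
=\sum_{a=1}^{q}\binom{\pi(x;q,a)}{k+1}
=\frac{1}{(k+1)!}\sum_{a}\pi(x;q,a)\cdots(\pi(x;q,a)-k),
\]
with $\frac{1}{(k+1)!}$, not $\frac{1}{k!}$; there is no extra $\frac{1}{k+1}$ hiding in the Prime Number Theorem. Second, this identity is \emph{exact}: the condition $p\le x-d_k$ is exactly $p+d_k\le x$, so all $k{+}1$ primes lie in $[2,x]$ and there is no truncation error---the bookkeeping you flag as the ``main obstacle'' does not arise. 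Third, to replace the falling factorial by $\pi(x;q,a)^{k+1}$ you need $\sum_a\pi(x;q,a)^{k}=o\bigl(\sum_a\pi(x;q,a)^{k+1}\bigr)$; Lemma~\ref{lemma3.1} gives only \emph{lower} bounds on moments, so ``is itself controlled'' needs an argument. One line suffices: iterating Cauchy--Schwarz gives $\sum_a\pi_a^{k+1}\big/\sum_a\pi_a^{k}\ge \sum_a\pi_a\big/\sum_a 1=\pi(x)/q\gg\log x$ under the hypothesis $q\le x/\log^2 x$.
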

\begin{proof}
The proof is based on formula (\ref{3.1}) in Lemma \ref{lemma3.1}. Note that
\begin{equation}\label{lemma5-1}
\begin{split}
\sum_{1\leq a\leq q}\pi(x;q,a)^{k+1}
=&\sum_{1\leq a\leq q}\bigg(\sum_{\substack{p_{1},\cdots,p_{k+1}\leq x\\p_{1}\equiv \cdots\equiv p_{k+1}\equiv a(\text{mod} q)}}1\bigg)
=\sum_{\substack{p_{1},\cdots,p_{k+1}\leq x\\p_{1}\equiv \cdots\equiv p_{k+1}(\text{mod} q)}}1.
\end{split}
\end{equation}
Let $\mathop{{\sum}'}$ denote the sum over distinct primes. We may classify the last sum above according to the number of distinct primes as follows
\begin{equation}\label{3.15}
\begin{split}
\sum_{1\leq a\leq q}\pi(x;q,a)^{k+1}=&C_{k+1,0}\mathop{{\sum}'}_{\substack{p_{1},\cdots,p_{k+1}\leq x\\p_{1}\equiv \cdots\equiv p_{k+1}(\text{mod} q)}}1+C_{k+1,1}\mathop{{\sum}'}_{\substack{p_{1},\cdots,p_{k}\leq x\\p_{1}\equiv \cdots\equiv p_{k}(\text{mod} q)}}1\\
&+\cdots+C_{k+1,i}\mathop{{\sum}'}_{\substack{p_{1},\cdots,p_{k+1-i}\leq x\\p_{1}\equiv \cdots\equiv p_{k+1-i}(\text{mod} q)}}1+\cdots+C_{k+1,k}\pi(x),
\end{split}
\end{equation}
where $C_{k+1,i}$ are combination coefficients which are decided only by $k$ and $i$ for $1\le i\le k$. It is easy to see that
\begin{align}
C_{k+1,0}=1,~C_{k+1,1}=\binom{k+1}{2},~C_{k+1,2}=\frac12\binom{k+1}{2}\binom{k-1}{2}+\binom{k+1}{3}, ~C_{k+1,k}=1\notag
\end{align}
and a very rough bound for all $1\le i\le k+1$ that
\begin{align}
C_{k+1,i}\le \binom{k+1}{i}\frac{(k+1-i)^i}{i},
\end{align}
which is enough for our proof.
Similarly we have
\begin{equation}\label{3.18}
\begin{split}
\sum_{1\leq a\leq q}\pi(x;q,a)^{k}=&\mathop{{\sum}'}_{\substack{p_{1},\cdots,p_{k}\leq x\\p_{1}\equiv \cdots\equiv p_{k}(\text{mod} q)}}1+C_{k,1}\mathop{{\sum}'}_{\substack{p_{1},\cdots,p_{k-1}\leq x\\p_{1}\equiv \cdots\equiv p_{k-1}(\text{mod} q)}}1\\
&+\cdots+C_{k,i}\mathop{{\sum}'}_{\substack{p_{1},\cdots,p_{k-i}\leq x\\p_{1}\equiv \cdots\equiv p_{k-i}(\text{mod} q)}}1+\cdots+\pi(x).
\end{split}
\end{equation}
Replace $k$ by $k+1$ in \eqref{3.1} and employ (\ref{3.15}), (\ref{3.18}) in it to have
\begin{align}\label{+1}
\mathop{{\sum}'}_{\substack{p_{1},\cdots,p_{k+1}\leq x\\p_{1}\equiv \cdots\equiv p_{k+1}(\text{mod} q)}}1\ge&\bigg(\frac{\pi(x)}{\phi(q)}-C_{k+1,1}\bigg)\mathop{{\sum}'}_{\substack{p_{1},\cdots,p_{k}\leq x\\p_{1}\equiv \cdots\equiv p_{k}(\text{mod} q)}}1+\cdots\notag\\
+\bigg(\frac{\pi(x)}{\phi(q)}&C_{k,i}-C_{k+1,i+1}\bigg)\mathop{{\sum}'}_{\substack{p_{1},\cdots,p_{k-i}\leq x\\p_{1}\equiv \cdots\equiv p_{k-i}(\text{mod} q)}}1+\cdots+\bigg(\frac{\pi(x)}{\phi(q)}-1\bigg)\pi(x)+O\bigg(\frac{\pi^{k}(x)}{\phi^{k}(q)}\log q\bigg).
\end{align}
Note that $\pi(x)/\phi(q)\rightarrow\infty$ as  $x\rightarrow\infty$ and
\begin{align}
\mathop{{\sum}'}_{\substack{p_{1},\cdots,p_{k-i}\leq x\\p_{1}\equiv \cdots\equiv p_{k-i}(\text{mod} q)}}1\ge0
\end{align}
for any $1\le i\le k-1$. Thus we may discard all terms behind the first term in the right-hand side of \eqref{+1} to have
\begin{align}\label{3.22}
\mathop{{\sum}'}_{\substack{p_{1},\cdots,p_{k+1}\leq x\\p_{1}\equiv \cdots\equiv p_{k+1}(\text{mod} q)}}1\ge&\bigg(\frac{\pi(x)}{\phi(q)}-C_{k+1,1}\bigg)\mathop{{\sum}'}_{\substack{p_{1},\cdots,p_{k}\leq x\\p_{1}\equiv \cdots\equiv p_{k}(\text{mod} q)}}1+O\bigg(\frac{\pi^{k}(x)}{\phi^{k}(q)}\log q\bigg).
\end{align}
Since for any $k\ge2$
\begin{align}
\mathop{{\sum}'}_{\substack{p_{1},\cdots,p_{k}\leq x\\p_{1}\equiv\cdots\equiv p_{k}(\text{mod} q)}}1&=k!\sum_{\substack{p_{1}<\cdots< p_{k}\leq x\\p_{1}\equiv \cdots\equiv p_{k}(\text{mod} q)}}1\notag\\
&=k!\sum_{{1< d_1<\cdots< d_{k-1}\leq x}\atop{q\mid d_{i},1\leq i\leq k-1}}\sum_{{p_{1},\cdots,p_{k}\leq x}\atop{p_{i+1}-p_{1}=d_{i}}}1\notag\\
&=k!\sum_{{1< d_1<\cdots< d_{k-1}\leq x}\atop{q\mid d_{i},1\leq i\leq k-1}}G_{k-1}(x,\{d_{1},d_{2},\cdots,d_{k-1}\}),
\end{align}
we have from (\ref{3.22}) that
\begin{align}\label{3.24}
\sum_{{1< d_1<\cdots< d_{k}\leq x}\atop{q\mid d_{i},1\leq i\leq k}}G_{k}&(x,\{d_{1},d_{2},\cdots,d_{k}\})\ge\notag\\
&\frac1{k+1}\bigg(\frac{\pi(x)}{\phi(q)}-\frac{k(k+1)}2\bigg)\sum_{{1< d_1<\cdots< d_{k-1}\leq x}\atop{q\mid d_{i},1\leq i\leq k-1}}G_{k-1}(x,\{d_{1},d_{2},\cdots,d_{k-1}\})+O\bigg(\frac{\pi^{k}(x)}{\phi^{k}(q)}\log q\bigg).
\end{align}
For $k=1$, it is proved in \cite{FGSS} that
\begin{align}
\sum_{{1< d_1\leq x}\atop{q\mid d_{1}}}G_{1}(x,\{d_{1}\})\ge\frac12\bigg(\frac{x^2}{\phi(q)\log^2x}-\frac{x}{\log x}\bigg)(1+o(1)).
\end{align}
Employing this into (\ref{3.24}) and by induction, we have
\begin{align}
\sum_{{1< d_1<\cdots< d_{k}\leq x}\atop{q\mid d_{i},1\leq i\leq k}}G_{k}&(x,\{d_{1},d_{2},\cdots,d_{k}\})\ge\frac1{(k+1)!}\frac{x^{k+1}}{\phi(q)^k\log^{k+1}x}(1+o(1)).
\end{align}
Thus we prove the lemma.
\end{proof}

We also need following familiar results deduced from Merten's formula.
\begin{lemma}\label{mertens}(Merten).
Let $0<a<b$ be any given constants. We have
\begin{equation}\label{mertens1}
\prod_{\substack{p\leq y}}\bigg(1-\frac{1}{p}\bigg)^{-1}=e^{\gamma}{\log y}+\mathit{O}(1)
\end{equation}
and
\begin{align}
\sum_{ay\le p\leq by}\frac{1}{p}\ll\frac{1}{\log y},
\end{align}
where $\gamma$ is Euler's constant and the constant in $\ll$ is decided by $a,b$.
\end{lemma}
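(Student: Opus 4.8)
The plan is to deduce both parts from the classical theorems of Mertens; no new ingredient is required. For \eqref{mertens1} one may either quote the statement directly from a standard analytic number theory reference, or derive it from the sharp form of Mertens' second theorem,
\[
\sum_{p\le y}\frac1p=\log\log y+M+O\Big(\frac1{\log y}\Big),
\]
$M$ being the Meissel--Mertens constant. From the expansion $-\log(1-1/p)=\frac1p+\frac1{2p^2}+\frac1{3p^3}+\cdots$, summing over $p\le y$ gives $\log\prod_{p\le y}(1-1/p)^{-1}=\sum_{p\le y}\frac1p+\sum_{j\ge2}\sum_{p\le y}\frac1{jp^j}$, where the double sum over all primes converges and its tail over $p>y$ is $O(1/y)$; exponentiating and invoking the classical identification of the resulting multiplicative constant with $e^{\gamma}$ yields $\prod_{p\le y}(1-1/p)^{-1}=e^{\gamma}\log y\,(1+O(1/\log y))=e^{\gamma}\log y+O(1)$.

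For the second estimate we may assume $y$ is large, since for $y$ in a bounded range the sum is over a bounded set of primes and the inequality is immediate; in particular $ay\ge2$. Applying the sharp Mertens' second theorem at $by$ and at $ay$ and subtracting,
\[
\sum_{ay\le p\le by}\frac1p=\log\log(by)-\log\log(ay)+O\Big(\frac1{\log y}\Big),
\]
with the implied constant depending only on $a,b$. Since $\log(by)=\log y+\log b$ and $\log(ay)=\log y+\log a$,
\[
\log\log(by)-\log\log(ay)=\log\Big(1+\frac{\log b-\log a}{\log y+\log a}\Big)=O\Big(\frac1{\log y}\Big),
\]
the constant again depending on $a,b$ through $|\log b-\log a|$. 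Combining the two displays gives $\sum_{ay\le p\le by}1/p\ll1/\log y$.

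The only point needing care is the error-term bookkeeping in the second part: one must check that the $O(1/\log y)$ coming from Mertens' theorem and the $O(1/\log y)$ coming from expanding $\log(1+u)$ merge into a single implied constant controlled by $a$ and $b$, and dispose of the degenerate case $ay<2$ separately (there the summation range is empty or a single bounded term, so the bound is trivial). Apart from this routine verification there is no genuine obstacle, the lemma being a direct consequence of Mertens' estimates.
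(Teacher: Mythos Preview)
Your argument is correct. Both parts are indeed immediate consequences of the sharp form of Mertens' theorems, and your bookkeeping (exponentiating the logarithmic expansion for the product, differencing $\log\log(by)-\log\log(ay)$ for the short sum, and handling the degenerate range $ay<2$ separately) is sound.

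As for comparison with the paper: the paper gives no proof at all. Lemma~\ref{mertens} is simply stated with the attribution ``(Merten)'' and then used; the authors treat it as a classical fact requiring no argument. So rather than taking a different route, you have supplied the details the paper deliberately omits. Your derivation is the standard one and is exactly what a reader would reconstruct if asked to justify the lemma.
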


\noindent
\emph{Proof of the Theorem \ref{mainthm1.1}. }Since $D_k^*$ is a $k$-tuple PDC, it is obvious from Lemma \ref{lembelowbound} that
\begin{align}\label{3.29}
G_{k}(x,D_k^*)&\ge k!\bigg(\frac qx\bigg)^k\sum_{{1< d_1<\cdots< d_{k}\leq x}\atop{q\mid d_{i},1\leq i\leq k}}G_{k}(x,\{d_{1},d_{2},\cdots,d_{k}\})\notag\\
&\ge\frac1{k+1}\bigg(\frac{q}{\phi(q)}\bigg)^k\frac{x}{\log^{k+1}x}(1+o(1)).
\end{align}
By this and the trivial upper bound
\begin{align}
G_{k}(x,D_k^*)\le x-d_k^*,
\end{align}
we can easily see that $x-d_k^*\rightarrow\infty $ as $x\rightarrow \infty$. Thus we may use the well-known sieve upper bound (2.11) and the formula (\ref{Gk}) to have that
\begin{align}\label{3.30}
G_{k}(x,D_k^*)&\leq (2^{k+1}{(k+1)!}+\epsilon)\mathfrak{S}(\{0\}\cup D_{k}^{*})\frac{x-d^{*}_{k}}{\log^{k+1} (x-d^*_k)}\notag\\
&\leq (2^{k+1}{(k+1)!}+\epsilon)\mathfrak{S}(\{0\}\cup D_{k}^{*})\frac{x}{\log^{k+1} x}.
\end{align}
Then we conclude from (\ref{3.29}) and (\ref{3.30}) that
\begin{align}
\mathfrak{S}(\{0\}\cup D_{k}^{*})\ge\frac1{2^{k+1}k!(k+1)^2}\bigg(\frac{q}{\phi(q)}\bigg)^k(1+o(1)).
\end{align}
Now we choose $q=\lfloor\frac{x}{\log^{2}x}\rfloor^{\sharp}=p_{l}^{\sharp}$. From the Prime Number Theorem we can easily see $p_{l}\sim\log x$. Since $\phi(q)=q\prod_{p\mid q}(1-\frac1p)$, we have
\begin{align}\label{3.32}
\mathfrak{S}(\{0\}\cup D_{k}^{*})\ge\frac1{2^{k+1}k!(k+1)^2}\prod_{p\le p_l}\bigg(1-\frac1p\bigg)^{-k}(1+o(1)).
\end{align}

On the other hand, from the definition of the singular series in \eqref{singularseriesdef}
\begin{align}\label{kseries}
\mathfrak{S}(\{0\}\cup D_{k}^*)&=\prod_p\bigg(1-\frac{1}{p}\bigg)^{-(k+1)}\bigg(1-\frac{{\textit{v}_{\{0\}\cup D_{k}^*}(p)}}{p}\bigg)\notag\\
&=\prod_{p\mid \Delta_{\{0\}\cup D_{k}^*}}\bigg(1-\frac{1}{p}\bigg)^{-(k+1)}\bigg(1-\frac{{\textit{v}_{\{0\}\cup D_{k}^*}(p)}}{p}\bigg)\prod_{p\nmid \Delta_{\{0\}\cup D_{k}^*}}\bigg(1-\frac{1}{p}\bigg)^{-(k+1)}\bigg(1-\frac{k+1}{p}\bigg)\notag\\
&\le\prod_{p\mid d^*}\bigg(1-\frac{1}{p}\bigg)^{-k}\prod_{{p\mid \Delta_{\{0\}\cup D_{k}^*}}\atop{p\nmid d^*}}\bigg(1-\frac{1}{p}\bigg)^{-(k+1)}\bigg(1-\frac{{\textit{v}_{\{0\}\cup D_{k}^*}(p)}}{p}\bigg),
\end{align}
where the inequality $(1-x)^{-m}(1-mx)\leq 1$ for $mx<1$ is used to eliminate the last product in the second line since every $p\le k+1$ has $p\mid \Delta_{\{0\}\cup D_{k}^*}$ for $\mathfrak{S}(\{0\}\cup D_{k}^*)\neq0$. Note that ${\textit{v}_{\{0\}\cup D_{k}^*}(p)}\ge2$ for $p\nmid d^*$. Thus
\begin{align}\label{3.35}
\mathfrak{S}(\{0\}\cup D_{k}^*)&\le\prod_{p\mid d^*}\bigg(1-\frac{1}{p}\bigg)^{-k}\prod_{{p\mid \Delta_{\{0\}\cup D_{k}^*}}\atop{p\nmid d^*}}\bigg(1-\frac{1}{p}\bigg)^{-(k+1)}\bigg(1-\frac{2}{p}\bigg)\notag\\
&\le\prod_{p\mid d^*}\bigg(1-\frac{1}{p}\bigg)^{-k}\prod_{{p\mid \Delta_{\{0\}\cup D_{k}^*}}\atop{p\nmid d^*}}\bigg(1-\frac{1}{p}\bigg)^{-(k-1)}.
\end{align}
Combining (\ref{3.32}) and (\ref{3.35}) we have
\begin{align}\label{3.36}
\prod_{p\mid d^*}\bigg(1-\frac{1}{p}\bigg)^{-k}\prod_{{p\mid \Delta_{\{0\}\cup D_{k}^*}}\atop{p\nmid d^*}}\bigg(1-\frac{1}{p}\bigg)^{-(k-1)}\ge\frac1{2^{k+1}k!(k+1)^2}\prod_{p\le p_l}\bigg(1-\frac1p\bigg)^{-k}(1+o(1)).
\end{align}
This gives
\begin{align}\label{3.37}
\prod_{p\mid d^*}\bigg(1-\frac{1}{p}\bigg)^{-k}\prod_{{p\mid \Delta_{\{0\}\cup D_{k}^*}}\atop{p\nmid d^*}}\bigg(1-\frac{1}{p}\bigg)^{-(k-1)}\ge\frac{e^{k\gamma}}{2^{k+1}k!(k+1)^2}(\log\log x)^{k}(1+o(1))
\end{align}
by Lemma \ref{mertens}. Let $\Omega(n)$ denote the total number of prime divisors of positive integer $n$. It is well known that
\begin{align}
\Omega(n)\le(1+\epsilon)\log n/\log\log n.
\end{align}
Since $\Delta_{\{0\}\cup D_{k}^*}\le x^{k(k+1)/2}$,
we have
\begin{align}
\prod_{p\mid \Delta_{\{0\}\cup D_{k}^*}}\bigg(1-\frac{1}{p}\bigg)^{-1}&
\le\prod_{p\le p_{\Omega(\Delta_{\{0\}\cup D^*_k})}}\bigg(1-\frac{1}{p}\bigg)^{-1}\notag\\
&\le\prod_{p\le k(k+1)\log x}\bigg(1-\frac{1}{p}\bigg)^{-1}\le e^\gamma\log\log x(1+o(1)),
\end{align}
which means
\begin{align}\label{3.39}
\prod_{p\mid d^*}\bigg(1-\frac{1}{p}\bigg)^{-1}\prod_{{p\mid \Delta_{\{0\}\cup D_{k}^*}}\atop{p\nmid d^*}}\bigg(1-\frac{1}{p}\bigg)^{-1}\le e^\gamma\log\log x(1+o(1)).
\end{align}
Then we conclude from (\ref{3.37}) and the $(k-1)$-th power of (\ref{3.39}) that
\begin{align}
\prod_{p\mid d^*}\bigg(1-\frac{1}{p}\bigg)^{-1}\ge\frac{e^{\gamma}}{2^{k+1}k!(k+1)^2}\log\log x(1+o(1)).
\end{align}
Also
\begin{align}
\prod_{p\mid d^*}\bigg(1-\frac{1}{p}\bigg)^{-1}\le\prod_{p\mid \lfloor x\rfloor^{\sharp}}\bigg(1-\frac{1}{p}\bigg)^{-1}=e^{\gamma}\log\log x(1+o(1)).
\end{align}
Taking logarithm in the above two formulae we have
\begin{equation}\label{thmresult}
\sum_{p\mid d^{*}}\frac{1}{p}=\log\log\log x+O(1).
\end{equation}

Therefore $d^{*}\rightarrow\infty$ and the number of distinct prime factors of $d^{*}$ also goes to infinity as $x \rightarrow\infty$, which proves Theorem \ref{mainthm1.1}.\\

\noindent
\emph{Proof of Theorem \ref{mainthm1.2}}.
Deducing from (\ref{3.36}) we have
\begin{align}\label{3.42}
\prod_{p\mid d^*}\bigg(1-\frac{1}{p}\bigg)^{-k}\prod_{{p\mid \Delta_{\{0\}\cup D_{k}^*}}\atop{p\nmid d^*}}\bigg(1-\frac{1}{p}\bigg)^{-(k-1)}\prod_{p\le p_l}\bigg(1-\frac1p\bigg)^{k}\ge\frac1{2^{k+1}k!(k+1)^2}(1+o(1)).
\end{align}
The left-hand side of this equation is not larger than
\begin{align}
\prod_{{p\mid d^*}\atop{p>p_l}}\bigg(1-\frac{1}{p}\bigg)^{-k}\prod_{{p\nmid d^*}\atop{p\le p_l}}\bigg(1-\frac1p\bigg).
\end{align}
By Lemma \ref{mertens} we have the first product
\begin{align}\label{3.44}
1\le\prod_{{p\mid d^*}\atop{p>p_l}}\bigg(1-\frac{1}{p}\bigg)^{-k}&\le\prod_{p_l<p\le p_{l+\Omega(d^*)}}\bigg(1-\frac{1}{p}\bigg)^{-k}\notag\\
&\le\prod_{\frac12\log x\le p\le 2\log x}\bigg(1-\frac{1}{p}\bigg)^{-k}=1+O\bigg(\frac1{\log\log x}\bigg).
\end{align}
Thus we conclude that
\begin{align}
\prod_{{p\nmid d^*}\atop{p\le 2\log x}}\bigg(1-\frac1p\bigg)\ge\frac1{2^{k+1}k!(k+1)^2}(1+o(1))
\end{align}
since we have
\begin{align}
\prod_{p_l\le p\le2\log x}\bigg(1-\frac1p\bigg)=1+O\bigg(\frac1{\log\log x}\bigg)
\end{align}
as \eqref{3.44}.
Taking logarithm and using the fact $x\le-\log(1-x)$ for $0<x<1$ we have
\begin{align}\label{3.46}
\sum_{{p\nmid d^*}\atop{p\le 2\log x}}\frac1p\le\log k!+k\log2+\log(2(k+1)^2)+o(1).
\end{align}

Also, we have the left-hand side of the equation \eqref{3.42} is not larger than
\begin{align}
\prod_{{p\mid \Delta_{\{0\}\cup D_{k}^*}}\atop{p>p_l}}\bigg(1-\frac{1}{p}\bigg)^{-k}\prod_{{p\nmid \Delta_{\{0\}\cup D_{k}^*}}\atop{p\le p_l}}\bigg(1-\frac1p\bigg)^k.
\end{align}
Similarly as (\ref{3.44}) we have
\begin{align}
\prod_{{p\mid \Delta_{\{0\}\cup D_{k}^*}}\atop{p>p_l}}\bigg(1-\frac{1}{p}\bigg)^{-k}=1+O\bigg(\frac1{\log\log x}\bigg),
\end{align}
and so
\begin{align}
\prod_{{p\nmid \Delta_{\{0\}\cup D_{k}^*}}\atop{p\le p_l}}\bigg(1-\frac1p\bigg)^k\ge\frac1{2^{k+1}k!(k+1)^2}(1+o(1)).
\end{align}
Then it follows the same as (\ref{3.46}) that
\begin{align}
\sum_{{p\nmid \Delta_{\{0\}\cup D_{k}^*}}\atop{p\le 2\log x}}\frac1p\le\frac1k\log k!+\log2+\frac1k\log(2(k+1)^2)+o(1).
\end{align}
Thus we complete the proof of Theorem \ref{mainthm1.2}.

\section{Results under the Hardy-Littlewood Prime $k$-tuple Conjecture}

The proof of Theorem \ref{mainthm1.3} is based on Conjecture \ref{conj2.1}. Some preparation deduced from the conjecture will be presented firstly. It mainly contains two aspects: an asymptotic formula for $G_{k}(x,D_{k})$ and a superior set $\widetilde D_{k}$ which makes $G_{k}(x,D_{k})$ large. Then we will deduce a rough range for $D_k^*$. At last we will give the proof of the theorem.

The asymptotic formula is fundamental to the proof. We present it in the following lemma.

\begin{lemma}\label{lem2}
For given integer $k\geq 1$, assume Conjecture \ref{conj2.1}. Let $D_{k}$ be a set of $k$ distinct integers such that $d_{1}<d_{2}<\cdots<d_{k}$ with $\mathfrak{S}(\{0\}\cup D_{k})\neq 0$.
If $D_{k}\subset \Big[2,\frac{2^{k+1}(k+1)!}{2^{k+1}(k+1)!+1}x\Big]$, then
\begin{equation}\label{lem2-1}
\begin{split}
&G_{k}(x,D_{k})=\mathfrak{S}(\{0\}\cup D_{k})\frac{x-d_{k}}{\log^{k+1}x}H(x,D_{k})\bigg(1+\mathit{o}\bigg(\frac{1}{\log^{2}x}\bigg)\bigg),\\
\end{split}
\end{equation}
where
\begin{equation*}
H(x,D_{k})=1+\frac{k+1}{\log x}+\frac{(k+1)(k+2)}{\log^{2} x}+\mathit{O}\bigg(\frac{d_{k}}{x\log x}\bigg)+\mathit{O}\bigg(\frac{1}{\log^{3} x}\bigg).
\end{equation*}
\end{lemma}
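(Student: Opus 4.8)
The plan is to start from the conditional asymptotic formula \eqref{HLconjecture1} in Conjecture \ref{conj2.1}, namely $G_k(x,D_k)=\mathfrak{S}(\{0\}\cup D_k)Li_{k+1}(x,D_k)+E(x,D_k)$ with $Li_{k+1}(x,D_k)=\int_2^{x-d_k}\log^{-(k+1)}t\,dt$, and to extract from the integral $Li_{k+1}$ an explicit main term of the shape $\frac{x-d_k}{\log^{k+1}x}H(x,D_k)$. The hypothesis $D_k\subset[2,\tfrac{2^{k+1}(k+1)!}{2^{k+1}(k+1)!+1}x]$ guarantees $x-d_k\gg x$, so $\log(x-d_k)=\log x+O(d_k/x)$ and, more precisely, $\log(x-d_k)=\log x+\log(1-d_k/x)=\log x+O(1)$ with the error controlled; this is exactly the regime where the uniform error bound \eqref{errorterm}, $E(x,D_k)=o(x/\log^{k+3}x)$, is available. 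The first step is therefore purely bookkeeping: confirm that all hypotheses of the Conjecture are met so that \eqref{HLconjecture1} and \eqref{errorterm} may be invoked.

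The second and main step is the asymptotic expansion of $Li_{k+1}(x,D_k)=\int_2^{x-d_k}\frac{dt}{\log^{k+1}t}$. The standard device is repeated integration by parts: $\int_2^{y}\frac{dt}{\log^{k+1}t}=\frac{y}{\log^{k+1}y}+(k+1)\int_2^y\frac{dt}{\log^{k+2}t}+O(1)$, and iterating once more produces $\frac{y}{\log^{k+1}y}+(k+1)\frac{y}{\log^{k+2}y}+(k+1)(k+2)\frac{y}{\log^{k+3}y}+O\!\big(\frac{y}{\log^{k+4}y}\big)$. Applying this with $y=x-d_k$ and then replacing $\log(x-d_k)$ by $\log x$ throughout — each such replacement costs a factor $1+O(d_k/(x\log x))$ since $\log(x-d_k)=\log x+O(d_k/x)$ when $d_k\le x/2$, and in general $=\log x\big(1+O(1/\log x)\big)$ with the $O(d_k/(x\log x))$ term appearing when one tracks the dependence on $d_k$ — one obtains $Li_{k+1}(x,D_k)=\frac{x-d_k}{\log^{k+1}x}\big(1+\frac{k+1}{\log x}+\frac{(k+1)(k+2)}{\log^2x}+O(\frac{d_k}{x\log x})+O(\frac1{\log^3x})\big)=\frac{x-d_k}{\log^{k+1}x}H(x,D_k)$, which is precisely the claimed shape. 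This step is routine calculus; the only subtlety is keeping the $d_k$-dependence of the logarithm explicit so that the $O(d_k/(x\log x))$ term in $H$ is correctly accounted for rather than absorbed silently.

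The third step is to combine the two pieces. We have $G_k(x,D_k)=\mathfrak{S}(\{0\}\cup D_k)\frac{x-d_k}{\log^{k+1}x}H(x,D_k)+o\!\big(\frac{x}{\log^{k+3}x}\big)$, and it remains to absorb the additive error $o(x/\log^{k+3}x)$ into a multiplicative factor $1+o(1/\log^2x)$ relative to the main term. For this one needs a lower bound $\mathfrak{S}(\{0\}\cup D_k)\frac{x-d_k}{\log^{k+1}x}H(x,D_k)\gg \frac{x}{\log^{k+1}x}$: since $H(x,D_k)=1+o(1)$ and $x-d_k\gg x$, this reduces to $\mathfrak{S}(\{0\}\cup D_k)\gg 1$, which holds because $\mathfrak{S}(\{0\}\cup D_k)=\prod_p(1-1/p)^{-(k+1)}(1-v_{\{0\}\cup D_k}(p)/p)\ne0$ is a product whose factors are $1+O(1/p^2)$ for all $p$ not dividing $\Delta_{\{0\}\cup D_k}$, and is bounded below away from $0$ (each factor is positive and the product converges); more crudely, $\mathfrak{S}(\{0\}\cup D_k)\ge c_k>0$ for any admissible tuple. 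Hence $o(x/\log^{k+3}x)=o(1/\log^2x)\cdot\mathfrak{S}(\{0\}\cup D_k)\frac{x-d_k}{\log^{k+1}x}H(x,D_k)$, and substituting this back yields \eqref{lem2-1}.

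I expect the main obstacle to be the third step — justifying the uniform lower bound $\mathfrak{S}(\{0\}\cup D_k)\gg_k 1$ that is needed to turn the additive error term of Conjecture \ref{conj2.1} into the multiplicative relative error $1+o(1/\log^2x)$. One must be careful that the error in \eqref{errorterm} is uniform in $D_k$ over the stated range, and that the implied constant in the lower bound for $\mathfrak{S}$ depends only on $k$; the remark after \eqref{upperbound} (that $v_{D_k}(p)=p$ is excluded) together with $\mathfrak{S}(\{0\}\cup D_k)\ne0$ ensures $v_{\{0\}\cup D_k}(p)\le\min(p-1,k+1)$ for every $p$, from which the convergence and positivity of the product, and a $k$-dependent lower bound, follow by a standard estimate splitting primes into $p\le k+1$ and $p>k+1$.
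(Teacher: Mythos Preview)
Your proposal is correct and follows essentially the same route as the paper: invoke Conjecture~\ref{conj2.1}, expand $Li_{k+1}(x,D_k)$ by repeated integration by parts, replace $\log(x-d_k)$ by $\log x$ at the cost $O(d_k/(x\log x))$, and absorb the additive error $o(x/\log^{k+3}x)$ into the multiplicative factor $1+o(1/\log^2x)$. The only cosmetic difference is that the paper first truncates the integral to $[\,x/\log^{k+4}x,\,x-d_k\,]$ before integrating by parts, whereas you work directly from $2$; and the paper simply writes ``Thus \eqref{lem2-1} follows from this and Conjecture~\ref{conj2.1}'' without isolating the uniform lower bound $\mathfrak{S}(\{0\}\cup D_k)\gg_k 1$ that you (correctly) identify as needed to pass from additive to relative error.
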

\begin{proof}
In view of the error term in Conjecture \ref{conj2.1}, we take
\begin{equation}
Li_{k+1}(x,D_{k})=
\int_{\frac{x}{\log^{k+4}x}}^{x-d_{k}}\frac{1}{\log^{k+1} t}dt+\mathit{O}\bigg(\frac{x}{\log^{k+4}x}\bigg)
\end{equation}
and integrate by parts three times to obtain
\begin{equation}\label{4.3}
\begin{split}
Li_{k+1}(x,D_{k})
=&\frac{x-d_{k}}{\log^{k+1}(x-d_{k})}+(k+1)\frac{x-d_{k}}{\log^{k+2}(x-d_{k})}\\
&+(k+1)(k+2)\frac{x-d_{k}}{\log^{k+3}(x-d_{k})}+O(Li_{k+4}(x,D_{k}))+\mathit{O}\bigg(\frac{x}{\log^{k+4}x}\bigg).\\
\end{split}
\end{equation}
Since $2\leq d_{k}\leq \frac{2^{k+1}(k+1)!}{2^{k+1}(k+1)!+1}x$, we have
\begin{equation}\label{4.4}
\log(x-d_{k})=\log x+\log\bigg(1-\frac{d_{k}}{x}\bigg)=\log x+\mathit{O}\bigg(\frac{d_{k}}{x}\bigg).
\end{equation}
Furthermore, we have the trivial estimate
\begin{align}\label{4.5}
Li_{k+4}(x,D_{k})\ll\frac{x-d_{k}}{\log^{k+4} x}.
\end{align}
Then employ (\ref{4.4}) and (\ref{4.5}) into (\ref{4.3}) to have
\begin{equation}
\begin{split}
Li_{k+1}(x,D_{k})=\frac{x-d_{k}}{\log^{k+1}x}\bigg(1+\frac{k+1}{\log x}+\frac{(k+1)(k+2)}{\log^{2}x}+\mathit{O}\bigg(\frac{d_{k}}{x\log x}\bigg)+\mathit{O}\bigg(\frac{1}{\log^{3} x}\bigg)\bigg).
\end{split}
\end{equation}
Thus \eqref{lem2-1} follows from this and Conjecture \ref{conj2.1}.
\end{proof}

We now come to the superior set $\widetilde D_{k}$. The asymptotic formula in Lemma \ref{lem2} tells us that $H(x,D_{k})$ is essentially constant for sufficiently large $x$. Thus $G_{k}(x,D_{k})$ actually depends on values of $d_k$ and the singular series.  From the definition of the singular series in \eqref{singularseriesdef}, we have
 \begin{equation}
 \mathfrak{S}(\{0\}\cup D_{k})=\prod_{p}\bigg(1-\frac{1}{p}\bigg)^{-(k+1)}\prod_{p\mid d}\bigg(1-\frac{1}{p}\bigg)\prod_{p\nmid d}\bigg(1-\frac{{v_{\{0\}\cup D_{k}}(p)}}{p}\bigg),
 \end{equation}
where $d$ is the great common divisor of all elements in $D_k$. Since $v_{\{0\}\cup D_{k}}(p)\ge2$ for $p\nmid d$, the singular series will enlarge when the number of distinct prime factors in $d$ grows and values of prime factors in $d$ diminish. Hence it seems that the primorial is a good potential choice for $d$. Thus we take
\begin{align}
\widetilde D_{k}=\Big\lfloor{\frac{x}{\log x}}\Big\rfloor^{\sharp}*\mathcal{K}
\end{align}
with $\mathcal{K}=\{1,2,\cdots,k\}$.
If denote $p_{n}^{\sharp}=\lfloor{\frac{x}{\log x}}\rfloor^{\sharp}$, we have $p_n\sim \log x$. One great benefit of this $\widetilde D_{k}$ is owning a singular series with large value, which is indicated in the following lemma.
\begin{lemma}\label{lem4.2}
 Let $D_{k}=\{d_{1},\cdots,d_{k}\}$ be any set of $k$ distinct integers with $d_{1}<d_{2}< \cdots< d_{k}\ll x$. Assume the set $\widetilde D_{k}=\lfloor\frac{x}{\log x}\rfloor^{\sharp}*\mathcal{K}$, then we always have
 \begin{equation}\label{Vinequation}
\frac{\mathfrak{S}(\{0\}\cup D_{k})}{\mathfrak{S}(\{0\}\cup\widetilde{D_{k}})}\leq 1+\frac{C}{\log\log x},
\end{equation}
where $C$ is a constant decided by $k$.
\end{lemma}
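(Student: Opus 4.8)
The plan is to compare the singular series factor by factor over all primes $p$. Write
$$
\frac{\mathfrak{S}(\{0\}\cup D_{k})}{\mathfrak{S}(\{0\}\cup\widetilde D_k)}
=\prod_p\frac{1-v_{\{0\}\cup D_k}(p)/p}{1-v_{\{0\}\cup\widetilde D_k}(p)/p},
$$
and split the primes into three ranges: the small primes $p\le p_n$ (those dividing $\lfloor x/\log x\rfloor^\sharp$), the "medium" primes $p_n<p\le Cx$ or so, and the large primes. First I would treat $\widetilde D_k$: for $p\le p_n$ every element of $\{0\}\cup\widetilde D_k$ is $\equiv 0\pmod p$, so $v_{\{0\}\cup\widetilde D_k}(p)=1$ and the denominator contributes exactly $\prod_{p\le p_n}(1-1/p)$; for $p>p_n$ one has $v_{\{0\}\cup\widetilde D_k}(p)=\min(p,k+1)$ (the residues of $\lfloor x/\log x\rfloor^\sharp\cdot\{0,1,\dots,k\}$ are distinct mod $p$ once $p>k$ and $p\nmid\lfloor x/\log x\rfloor^\sharp$), so the denominator is essentially the "generic" Euler product. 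The upshot is that $\mathfrak{S}(\{0\}\cup\widetilde D_k)$ is, up to the constant $\prod_{p\le k}$-correction and Mertens, of exact order $(\log\log x)^k$.

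Next I would bound the numerator $\mathfrak{S}(\{0\}\cup D_k)$ from above for an arbitrary $D_k$. The key inequality is the same one used in \eqref{kseries}: $(1-1/p)^{-(k+1)}(1-(k+1)/p)\le 1$ for $p>k+1$, so the primes $p\nmid\Delta_{\{0\}\cup D_k}$ contribute at most $1$ each, and
$$
\mathfrak{S}(\{0\}\cup D_k)\le\prod_{p\mid d}\Bigl(1-\frac1p\Bigr)^{-k}\ \prod_{\substack{p\mid\Delta_{\{0\}\cup D_k}\\ p\nmid d}}\Bigl(1-\frac1p\Bigr)^{-(k+1)}\Bigl(1-\frac{v_{\{0\}\cup D_k}(p)}{p}\Bigr),
$$
exactly as in the chain \eqref{kseries}--\eqref{3.35}; using $v\ge 2$ on the second product bounds it by $\prod_{p\mid\Delta,\,p\nmid d}(1-1/p)^{-(k-1)}$. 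Since $\Delta_{\{0\}\cup D_k}\le x^{k(k+1)/2}$, the number of its prime factors is $O(\log x/\log\log x)$, so by Mertens the product over $p\mid\Delta$, $p\nmid d$ is $1+O(1/\log\log x)$ raised to a bounded power, hence $O(1)$; and $\prod_{p\mid d}(1-1/p)^{-k}\le\prod_{p\le C\log x}(1-1/p)^{-k}=(e^\gamma\log\log x)^k(1+o(1))$ because $d\ll x$ forces $\Omega(d)=O(\log x/\log\log x)$. Therefore $\mathfrak{S}(\{0\}\cup D_k)\le (e^\gamma\log\log x)^k(1+o(1))$ times a bounded constant, matching the lower order $(\log\log x)^k$ of the denominator.

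Finally, to get the sharp constant $1+C/\log\log x$ rather than merely $O(1)$, I would not estimate numerator and denominator separately but compare them prime-by-prime: for each $p\le p_n$ the numerator's factor $1-v_{\{0\}\cup D_k}(p)/p$ is at most $1-1/p$ (since $v\ge1$ always), which is exactly the denominator's factor, while for $p>p_n$ both numerator and denominator factors are $1+O(1/p)$ and differ by $O(1/p)$ on a range of primes of total reciprocal mass $O(1/\log\log x)$ by the second part of Lemma \ref{mertens}, with only finitely many (namely $p\le k+1$) "bad" primes where $v_{\{0\}\cup D_k}(p)$ could be as small as $2$ while $v_{\{0\}\cup\widetilde D_k}(p)=p$ — these contribute the bounded constant $C$ absorbed into the statement. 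The main obstacle is precisely this last step: controlling the primes $p$ slightly larger than $p_n\sim\log x$ where $D_k$ might have $v_{\{0\}\cup D_k}(p)$ much smaller than $v_{\{0\}\cup\widetilde D_k}(p)$ (e.g. if $d$ shares these primes too) — but any such prime divides $d\mid\Delta_{\{0\}\cup D_k}$, and as above there are at most $O(\log x/\log\log x)$ of them lying in $(\log x/\log\log x,\,2\log x)$, contributing $1+O(1/\log\log x)$ by Mertens; so the gain from such primes in the numerator is offset and the ratio stays $1+O(1/\log\log x)$.
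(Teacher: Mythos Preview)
Your final prime-by-prime comparison is essentially the paper's argument and is correct; the detour through the weaker $O(1)$ bound in your second paragraph is unnecessary. The paper streamlines the last step by first replacing the set of prime divisors of $\Delta_{\{0\}\cup D_k}$ with the primes dividing the primorial $\lfloor\Delta_{\{0\}\cup D_k}\rfloor^\sharp$ (this only enlarges the upper bound, since $(p-1)/(p-k-1)$ is decreasing in $p$), so that the quotient $\mathfrak{S}(\{0\}\cup D_k)/\mathfrak{S}(\{0\}\cup\widetilde D_k)$ is bounded by $\prod_{p_n<p\le p_m}(p-1)/(p-k-1)$ over a genuine interval with $p_n\sim\log x$ and $p_m\le k(k+1)\log x$, to which Lemma~\ref{mertens} applies directly. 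One imprecision in your write-up: the primes $p>p_n$ with $p\mid\Delta_{\{0\}\cup D_k}$ need not lie in any fixed window such as $(\log x/\log\log x,\,2\log x)$ --- they can be arbitrarily large --- but your count $\omega(\Delta)=O(\log x/\log\log x)$ together with each factor being at most $1+k/(p_n-k)=1+O(1/\log x)$ already gives the product bound $1+O(1/\log\log x)$, so the conclusion stands.
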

\begin{proof}
Note from the definition of the singular series that
\begin{align}
\mathfrak{S}(\{0\}\cup D_{k})\le\prod_p\bigg(1-\frac1p\bigg)^{-(k+1)}\prod_{p\mid \lfloor{\Delta_{\{0\}\cup D_k}}\rfloor^{\sharp}}\bigg(1-\frac1p\bigg)\prod_{p\nmid \lfloor{\Delta_{\{0\}\cup D_k}}\rfloor^{\sharp}}\bigg(1-\frac {k+1}p\bigg)
\end{align}
and
\begin{align}
\mathfrak{S}(\{0\}\cup \widetilde D_{k})=\prod_p\bigg(1-\frac1p\bigg)^{-(k+1)}\prod_{p\mid \lfloor{\frac x{\log x}}\rfloor^{\sharp}}\bigg(1-\frac1p\bigg)\prod_{p\nmid \lfloor{\frac x{\log x}}\rfloor^{\sharp}}\bigg(1-\frac {k+1}p\bigg).
\end{align}
Thus
\begin{align}
\frac{\mathfrak{S}(\{0\}\cup D_{k})}{\mathfrak{S}(\{0\}\cup\widetilde{D_{k}})}&\le\prod_{{p\mid \lfloor{\Delta_{\{0\}\cup D_k}}\rfloor^{\sharp}}\atop{p\nmid \lfloor{\frac x{\log x}}\rfloor^{\sharp}}}\frac{p-1}{p-k-1}\notag\\
&\le\prod_{p_{\Omega(\lfloor{\frac x{\log x}}\rfloor^{\sharp})}< p\le p_{\Omega(\lfloor{\Delta_{\{0\}\cup D_k}}\rfloor^{\sharp})}}\frac{p-1}{p-k-1}\notag\\
&\le\prod_{\frac12\log x\le p\le k(k+1)\log x}\frac{p-1}{p-k-1}.
\end{align}
Then we have by Lemma \ref{mertens}
\begin{align}
\frac{\mathfrak{S}(\{0\}\cup D_{k})}{\mathfrak{S}(\{0\}\cup\widetilde{D_{k}})}\le1+\frac C{\log\log x}.
\end{align}
Thus we complete the proof.
\end{proof}

Denote $\widetilde D_{k}$ by $\{\widetilde d_1, \widetilde d_2,\cdots,\widetilde d_k\}$. Since $\widetilde d_k\le \frac{kx}{\log x}$,
then the asymptotic formula in Lemma \ref{lem2} for $G_{k}(x,\widetilde D_{k})$ is
\begin{align}
G_{k}(x,\widetilde D_{k})=\mathfrak{S}(\{0\}\cup \widetilde D_{k})\frac{x-\widetilde d_{k}}{\log^{k+1}x}H(x,\widetilde D_{k})\bigg(1+\mathit{o}\bigg(\frac{1}{\log^{2}x}\bigg)\bigg)
\end{align}
with
\begin{equation*}
H(x,\widetilde D_{k})=1+\frac{k+1}{\log x}+\frac{(k+1)(k+2)}{\log^{2} x}+\mathit{O}\bigg(\frac{\widetilde d_{k}}{x\log x}\bigg)+O\bigg(\frac{1}{\log^{3} x}\bigg).
\end{equation*}
This indicates that
\begin{align}\label{4.11}
G_{k}(x,\widetilde D_{k})\ge\mathfrak{S}(\{0\}\cup \widetilde D_{k})\frac{x}{\log^{k+1}x}\bigg(1+\frac1{\log x}+O\bigg(\frac1{\log^2x}\bigg)\bigg).
\end{align}

With this preparation we may deduce a rough range for $D_k^*$ in the following proposition.
\begin{proposition}\label{pro4.1}
For sufficiently large $x$, let $D_{k}^{*}=\{d_1^*,d_2^*,\cdots,d_k^*\}$ be a $k$-tuple PDC for primes $\le x$. We have $(1-\delta)\frac{x}{\log^{2} x}<d_k^*<(C+\epsilon)\frac{x}{\log\log x}$ for any given small constants $\epsilon,\delta>0$, where $C$ is the constant as in \eqref{Vinequation}.
\end{proposition}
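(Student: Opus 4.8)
The plan is to squeeze $d_k^*$ from two sides by comparing $G_k(x,D_k^*)$ against the benchmark $G_k(x,\widetilde D_k)$ from \eqref{4.11}, using the asymptotic formula of Lemma \ref{lem2} together with the singular-series comparison of Lemma \ref{lem4.2}. Since $D_k^*$ is a PDC, we must have $G_k(x,D_k^*)\ge G_k(x,\widetilde D_k)$; on the other hand Lemma \ref{lem2} (applied to $D_k^*$, whose range must first be checked to lie in $[2,\tfrac{2^{k+1}(k+1)!}{2^{k+1}(k+1)!+1}x]$, which is exactly why the constant $\tfrac{2^{k+1}(k+1)!}{2^{k+1}(k+1)!+1}$ was chosen in Conjecture \ref{conj2.1}) gives an upper bound for $G_k(x,D_k^*)$ in terms of $\mathfrak{S}(\{0\}\cup D_k^*)$ and $(x-d_k^*)$. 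Combining these with Lemma \ref{lem4.2}, which bounds $\mathfrak{S}(\{0\}\cup D_k^*)$ by $(1+C/\log\log x)\,\mathfrak{S}(\{0\}\cup\widetilde D_k)$, the singular series essentially cancels and we are left with an inequality roughly of the shape
\[
(x-d_k^*)\Bigl(1+\tfrac{k+1}{\log x}+\cdots\Bigr)\ge x\Bigl(1+\tfrac1{\log x}+\cdots\Bigr)\bigl(1+o(1)\bigr)^{-1},
\]
which forces $d_k^*$ to be small relative to $x$ — but not too small, because the $\tfrac{k+1}{\log x}$ versus $\tfrac1{\log x}$ gap in the $H$-factors cannot by itself absorb a large deficit $d_k^*/x$.

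For the lower bound $d_k^*>(1-\delta)\tfrac{x}{\log^2 x}$ I would argue by contradiction: if $d_k^*\le (1-\delta)\tfrac{x}{\log^2 x}$ then in $H(x,D_k^*)$ the term $O(d_k^*/(x\log x))$ is $O(1/\log^3 x)$, negligible, and $(x-d_k^*)/\log^{k+1}x$ differs from $x/\log^{k+1}x$ by at most a $(1-(1-\delta)/\log^2 x)$ factor; then the PDC inequality $G_k(x,D_k^*)\ge G_k(x,\widetilde D_k)$ combined with Lemma \ref{lem2}, Lemma \ref{lem4.2} and \eqref{4.11} yields $\mathfrak{S}(\{0\}\cup D_k^*)\ge(1+\tfrac{1}{\log x}+o(\tfrac1{\log x}))\,\mathfrak{S}(\{0\}\cup\widetilde D_k)$, which contradicts Lemma \ref{lem4.2} since $1/\log x$ eventually exceeds $C/\log\log x$ — wait, it does not, $1/\log x\ll 1/\log\log x$. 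So the lower bound must instead come from the reverse: one shows that if $d_k^*$ were that small, then $\widetilde D_k$ (or a competitor with $\widetilde d_k$ of order $x/\log x$, i.e.\ much larger $d_k$) would have been beaten, giving $\mathfrak{S}(\{0\}\cup\widetilde D_k)$ a disadvantage of order $\widetilde d_k/x\asymp 1/\log x$ in $(x-\widetilde d_k)/x$ that the singular-series gain $1+C/\log\log x$ more than compensates — hence a PDC cannot avoid having $d_k^*$ comparably large, at least $(1-\delta)x/\log^2 x$, since $\log^{-2}x$ is the natural scale at which $q=\lfloor x/\log^2 x\rfloor^\sharp$ from the unconditional estimate \eqref{3.29} already guarantees $G_k$ is large.

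For the upper bound $d_k^*<(C+\epsilon)\tfrac{x}{\log\log x}$ I would again use $G_k(x,D_k^*)\ge G_k(x,\widetilde D_k)$: by Lemma \ref{lem2} and the trivial bound $\mathfrak{S}(\{0\}\cup D_k^*)\le\mathfrak{S}(\{0\}\cup\widetilde D_k)(1+C/\log\log x)$ from Lemma \ref{lem4.2}, together with \eqref{4.11}, we get
\[
(x-d_k^*)\Bigl(1+\tfrac{C}{\log\log x}\Bigr)\bigl(1+o(1)\bigr)\ge x\bigl(1+o(1)\bigr),
\]
so $d_k^*/x\le C/\log\log x+o(1/\log\log x)<(C+\epsilon)/\log\log x$ for large $x$, i.e.\ $d_k^*<(C+\epsilon)x/\log\log x$. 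The main obstacle, and the step I would spend the most care on, is the lower bound: making precise exactly which competitor set forces $d_k^*$ to be at least $(1-\delta)x/\log^2 x$, and checking that the error terms in Lemma \ref{lem2} (in particular the $O(d_k/(x\log x))$ term in $H$, and the $o(1/\log^2 x)$ multiplicative error) are genuinely smaller than the $1/\log x$-scale effects being compared — the bookkeeping has to be done at the level of the $1/\log x$ term of $H$, not just the leading term, and one must ensure the admissibility condition $\mathfrak{S}(\{0\}\cup D_k^*)\ne0$ (so Lemma \ref{lem2} applies) as well as $D_k^*$ lying in the allowed range so that Conjecture \ref{conj2.1}'s uniform error term is in force.
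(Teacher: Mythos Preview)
Your upper-bound argument is essentially the paper's, modulo one step you flag but do not carry out: before Lemma~\ref{lem2} can be applied to $D_k^*$ you must already know $d_k^*\le\frac{2^{k+1}(k+1)!}{2^{k+1}(k+1)!+1}x$. The paper disposes of the complementary range in two preliminary cases, using the trivial bound $G_k(x,D_k)\le x-d_k$ when $d_k\ge x-x/\log^{k+1}x$, and the unconditional sieve bound \eqref{2.11} (not Conjecture~\ref{conj2.1}) when $\frac{2^{k+1}(k+1)!}{2^{k+1}(k+1)!+1}x<d_k\le x-x/\log^{k+1}x$; both give $G_k(x,D_k)<G_k(x,\widetilde D_k)$ without invoking Lemma~\ref{lem2}. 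Once that is in place, your comparison with $\widetilde D_k$ via Lemmas~\ref{lem2} and \ref{lem4.2} yields $d_k^*<(C+\epsilon)x/\log\log x$ exactly as in the paper.

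The genuine gap is the lower bound, and you are right to be uneasy. Comparing with $\widetilde D_k$ cannot work here: if $d_k^*$ is tiny then on the $(x-d_k)$ side $D_k^*$ actually \emph{beats} $\widetilde D_k$, and the singular-series advantage of $\widetilde D_k$ is capped at $1+C/\log\log x$ by Lemma~\ref{lem4.2}, which is far too coarse to locate $d_k^*$ at scale $x/\log^2 x$. The paper's competitor is not a fixed benchmark but a modification of $D_k^*$ itself. Suppose $d_k^*\le(1-\delta)x/\log^2 x$; since $d^*\le d_k^*$, the Prime Number Theorem guarantees a prime $p'\le\log x$ with $p'\nmid d^*$. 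Compare $D_k^*$ with $p'\ast D_k^*$. Because $v_{\{0\}\cup D_k^*}(p')\ge2$, the singular-series ratio obeys
\[
\frac{\mathfrak{S}(\{0\}\cup D_k^*)}{\mathfrak{S}(\{0\}\cup p'\ast D_k^*)}
=\frac{1-v_{\{0\}\cup D_k^*}(p')/p'}{1-1/p'}\le 1-\frac{1}{p'-1}\le 1-\frac{1}{\log x},
\]
while the loss in the main-term factor is only $(x-d_k^*)/(x-p'd_k^*)\le 1+(1-\delta)/\log x$ since $p'd_k^*\le(1-\delta)x/\log x$. Both effects live at scale $1/\log x$ and the gain strictly dominates by a margin $\sim\delta/\log x$, so Lemma~\ref{lem2} gives $G_k(x,D_k^*)<G_k(x,p'\ast D_k^*)$, contradicting the PDC property. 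The missing idea, then, is that the lower bound comes not from a fixed comparison set but from showing that any $D_k$ with $d_k$ too small can be strictly improved by multiplying through by a small prime it is missing.
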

\begin{proof}
The proof is devoted to discussing four cases of $d_k$ according to the partition of the range $[1,x]$. Since $D_{k}^{*}$ is a best set to make $G_{k}(x,D_{k})$ large, we eliminate each case by finding another preferable set for every $D_k$ with $d_k$ in the case.

\emph{case 1}. Suppose $x-\frac{x}{\log^{k+1}x}\leq d_{k} \leq x$. We see that
\begin{equation}
\begin{split}
G_{k}(x,D_{k})=\sum_{\substack{p\leq x-d_{k}\\p+d_{i}\hspace{0.05cm}are\hspace{0.05cm}all\hspace{0.05cm}primes\\1\leq i\leq k}}1\le\sum_{p\leq x-d_{k}}1\leq x-d_k\leq \frac{x}{\log^{k+1}x}<G_{k}(x,\widetilde D_{k})
\end{split}
\end{equation}
by \eqref{4.11}.

\emph{case 2}. Suppose $\frac{2^{k+1}(k+1)!}{2^{k+1}(k+1)!+1}x< d_{k}\leq x-\frac{x}{\log^{k+1}x}$. Using the well-known sieve upper bound \eqref{2.11} we have
\begin{equation}
\begin{split}
G_{k}(x,D_{k})
&\leq \bigg(2^{k+1}(k+1)!+\epsilon\bigg)\mathfrak{S}(\{0\}\cup D_{k})\frac{x-d_{k}}{\log^{k+1}(x-d_{k})}\\
&\leq \frac{2^{k+1}(k+1)!+\epsilon}{2^{k+1}(k+1)!+1}\mathfrak{S}(\{0\}\cup D_{k})\frac{x}{\log^{k+1}x-(k+1)\log\log x}\\
&<\frac{2^{k+1}(k+1)!+1}{2^{k+1}(k+1)!+2}\mathfrak{S}(\{0\}\cup D_{k})\frac{x}{\log^{k+1}x}\\
\end{split}
\end{equation}
for sufficiently large $x$. Then by Lemma \ref{lem4.2} and \eqref{4.11} we have
\begin{align}
G_{k}(x,D_{k})&<\frac{2^{k+1}(k+1)!+1}{2^{k+1}(k+1)!+2}\frac{\mathfrak{S}(\{0\}\cup D_{k})}{\mathfrak{S}(\{0\}\cup\widetilde{D_{k}})}\mathfrak{S}(\{0\}\cup \widetilde D_{k})\frac{x}{\log^{k+1}x}\notag\\
&\le\frac{2^{k+1}(k+1)!+1}{2^{k+1}(k+1)!+2}G_{k}(x,\widetilde D_{k})\bigg(1+O\bigg(\frac1{\log\log x}\bigg)\bigg)\notag\\
&<G_{k}(x,\widetilde D_{k})
\end{align}
for sufficiently large $x$.

\emph{case 3}. Suppose $\frac{(C+\epsilon)x}{\log\log x}\le d_k\le\frac{2^{k+1}(k+1)!}{2^{k+1}(k+1)!+1}x$ for some $\epsilon>0$. Then by Lemma \ref{lem2} and Lemma \ref{lem4.2}
\begin{equation}\label{dkinequation}
\begin{split}
\frac{G_{k}(x, D_{k})}{G_{k}(x,\widetilde{D_{k}})}
&\leq \bigg(1+\frac{C}{\log\log x}\bigg)\frac{x-d_{k}}{x-\tilde{d_{k}}}\frac{H(x,D_{k})}{H(x,\widetilde{D_{k}})}
\bigg(1+\mathit{o}\bigg(\frac{1}{\log^{2}x}\bigg)\bigg)\\
&\le1-\frac{\epsilon}{2\log\log x}<1
\end{split}
\end{equation}
for sufficiently large $x$.

\emph{case 4}. Suppose $d_k\le(1-\delta)\frac{x}{\log^{2} x}$ for some small constant $\delta>0$. Due to the Prime Number Theorem, we can find a prime $p'\leq\log x$ with $p'\nmid d$ since $d\le d_k$. For $v_{\{0\}\cup D_{k}^{*}}(p^{\prime})\geq 2$, it is easy to see that
\begin{equation}\label{ratio1}
\begin{split}
\frac{\mathfrak{S}(\{0\}\cup D_{k})}{\mathfrak{S}(\{0\}\cup p^{\prime}\ast D_{k})}
&=\bigg(1-\frac{{v_{\{0\}\cup D_{k}}(p^{\prime})}}{p^{\prime}}\bigg)\bigg(1-\frac{1}{p^{\prime}}\bigg)^{-1}\leq 1-\frac{1}{\log x}.
\end{split}
\end{equation}
As $p^{\prime}d_{k}\leq(1-\delta)\frac{x}{\log x}$ we use Lemma \ref{lem2} to have
\begin{equation}\label{contradiction2}
\begin{split}
\frac{G_{k}(x, D_{k})}{G_{k}(x,p'*D_{k})}=&\frac{\mathfrak{S}(\{0\}\cup D_{k})}{\mathfrak{S}(\{0\}\cup p^{\prime}\ast D_{k})}\frac{x-d_k}{x-p'd_k}\bigg(1+O\bigg(\frac1{\log^2x}\bigg)\bigg)\\
&\leq \bigg(1-\frac{1}{\log x}\bigg)\bigg(1+\frac{1-\delta}{\log x}\bigg)\bigg(1+O\bigg(\frac1{\log^2x}\bigg)\bigg)\\
&\le1-\frac{\delta}{2\log x}<1
\end{split}
\end{equation}
for sufficiently large $x$.

Therefore we conclude that $D_k$ can not be a $k$-tuple PDC with $d_k$ in these four ranges, which proves the proposition.
\end{proof}

\noindent
\emph{Proof of (i) in Theorem \ref{mainthm1.3}}.
Since $d_k^*\le(C+\epsilon)\frac{x}{\log\log x}$ by Proposition \ref{pro4.1}, we have from the asymptotic formula in Lemma \ref{lem2} that

\begin{align}
G_{k}(x,\lfloor x^{\frac{1}{2}}\rfloor^{\sharp}\ast\mathcal{K})&\leq G_{k}(x,D_k^*)\le\mathfrak{S}(\{0\}\cup D_{k}^{*})\frac{x}{\log^{k+1}x}\bigg(1+\frac{C+\epsilon}{\log\log x}\bigg)
\end{align}
with
\begin{align}\label{dualequation}
G_{k}(x,\lfloor x^{\frac{1}{2}}\rfloor^{\sharp}\ast\mathcal{K})=\mathfrak{S}(\{0\}\cup\lfloor x^{\frac{1}{2}}\rfloor^{\sharp}\ast\mathcal{K})\frac{x}{\log^{k+1}x}\bigg(1+O\bigg(\frac1{\log x}\bigg)\bigg),
\end{align}
which means
\begin{equation}\label{seriesbelow}
\frac{\mathfrak{S}(\{0\}\cup\lfloor x^{\frac{1}{2}}\rfloor^{\sharp}\ast\mathcal{K})}{\mathfrak{S}(\{0\}\cup D_{k}^{*})}\leq 1+\frac{C+\epsilon}{\log\log x}.
\end{equation}
Let $p'$ be any prime with $p'\ll\log\log x$ but $p'\nmid d^{*}$. It is easy to see that $2\le v_{{\{0\}\cup D_{k}^{*}}}(p')<p'$ as $D_{k}^*$ is a $k$-tuple PDC. Then we have by Lemma \ref{lem4.2}
\begin{align}\label{set2}
\mathfrak{S}(\{0\}\cup D_{k}^{*})\bigg(1+\frac{v_{{\{0\}\cup D_{k}^{*}}}(p')-1}{p'-v_{{\{0\}\cup D_{k}^{*}}}(p')}\bigg)
&=\mathfrak{S}(\{0\}\cup p'\ast D_{k}^{*})\notag\\
&\leq\mathfrak{S}(\{0\}\cup\widetilde{D_{k}})\bigg(1+\frac{C+\epsilon}{\log\log x}\bigg).
\end{align}
Thus combine \eqref{seriesbelow} and \eqref{set2} to have
\begin{equation}\label{factor1}
\begin{split}
1+\frac{v_{{\{0\}\cup D_{k}^{*}}}(p')-1}{p'-v_{{\{0\}\cup D_{k}^{*}}}(p')}\leq\frac{\mathfrak{S}(\{0\}\cup\widetilde{D_{k}})}{\mathfrak{S}(\{0\}\cup\lfloor x^{\frac{1}{2}}\rfloor^{\sharp}\ast\mathcal{K})}\bigg(1+\frac{2C+\epsilon}{\log\log x}\bigg).\\
\end{split}
\end{equation}
A deduction the same as the proof of Lemma \ref{lem4.2} gives
\begin{align}\label{ratioseries1}
\frac{\mathfrak{S}(\{0\}\cup\widetilde{D_{k}})}{\mathfrak{S}(\{0\}\cup\lfloor x^{\frac{1}{2}}\rfloor^{\sharp}\ast\mathcal{K})}
\leq 1+\frac{C'}{\log\log x}
\end{align}
with $C'$ is a constant decided by $k$.
Thus we conclude that
\begin{equation}\label{factor2}
1+\frac{v_{{\{0\}\cup D_{k}^{*}}}(p')-1}{p'-v_{{\{0\}\cup D_{k}^{*}}}(p')}\leq 1+\frac{2C+C'+\epsilon}{\log\log x}
\end{equation}
with $2\leq v_{{\{0\}\cup D_{k}^{*}}}(p')\leq\min(k+1,p'-1)$. This means that $p'> C_k\log\log x$ as $x\rightarrow\infty$, where $C_k$ is a constant decided by $k$. Therefore, when $x$ is large enough, all prime $p'\le C_k\log\log x$ must divide every element of any $k$-tuple PDCs for primes $\le x$. This proves part (i) of Theorem \ref{mainthm1.3}.\\

\noindent
\emph{Proof of (ii) in Theorem \ref{mainthm1.3}}.
The proof is due to contradiction. Let $p'$ be a prime with $p'^2\mid d^{*}$. Then by the asymptotic formula in Lemma \ref{lem2} we have
\begin{equation}\label{contradiction3}
\begin{split}
\frac{G_{k}(x,D_{k}^{*})}{G_{k}\Big(x,\frac1{p'}*D_{k}^{*}\Big)}
&=\frac{\mathfrak{S}(\{0\}\cup D_{k}^{*})}{\mathfrak{S}\Big(\{0\}\cup \frac1{p'}*D_{k}^{*}\Big)}\frac{x-d^{*}_{k}}{x-\frac{d^*_k}{p'}}\bigg(1+o\bigg(\frac{d^*_k}{x}\bigg)+\mathit{o}\bigg(\frac{1}{\log^{2} x}\bigg)\bigg)\\
&=\frac{x-d^{*}_{k}}{x-\frac{d^*_k}{p'}}\bigg(1+o\bigg(\frac{d^*_k}{x}\bigg)+\mathit{o}\bigg(\frac{1}{\log^{2} x}\bigg)\bigg).
\end{split}
\end{equation}
Since $d_{k}^{*}\gg\frac{x}{\log^{2}x}$. we have
\begin{align}
G_{k}(x,D_{k}^{*})\leq \bigg(1-\frac{d_{k}^{*}}{3x}\bigg)G_{k}\bigg(x,\frac1{p'}*D_{k}^{*}\bigg)<G_{k}\bigg(x,\frac1{p'}*D_{k}^{*}\bigg),
\end{align}
which contradicts the definition of the $k$-tuple PDC. Thus $d^{*}$ must be square-free, which proves part (ii) of Theorem \ref{mainthm1.3}.


\end{document}